\numberwithin{equation}{section}
\newtheorem{theorem}{Theorem}[section]
\newtheorem{lemma}[theorem]{Lemma}
\theoremstyle{definition}
\newtheorem{definition}[theorem]{Definition}
\newtheorem{example}[theorem]{Example}
\newtheorem{remark}[theorem]{Remark}
\newtheorem*{ackn}{Acknowledgements}
\newcommand \PSH {{\rm PSH}}
\subjclass[2010]{32W20, 32U05, 32Q15, 35A23}
\keywords{Complex Monge-Amp\`ere equations,  Mean field equations, Uniqueness, Stability, K\"ahler-Einstein metrics}
\begin{document}

\title[Complex Monge-Amp\`ere mean field equations]{On uniqueness of solutions to  complex Monge-Amp\`ere mean field equations}

\author{Chinh H. Lu}

\address{Institut Universitaire de France \& Univ Angers, CNRS, LAREMA, SFR MATHSTIC, F-49000 Angers, France.}

\email{\href{mailto:hoangchinh.lu@univ-angers.fr}{hoangchinh.lu@univ-angers.fr}} 
\urladdr{\href{https://math.univ-angers.fr/~lu/}{https://math.univ-angers.fr/~lu/}}

\author{Trong-Thuc Phung}

\address{Ho Chi Minh City University of Technology, VNU-HCM, Viet Nam.}
\email{\href{mailto:ptrongthuc@hcmut.edu.vn}{ptrongthuc@hcmut.edu.vn}}
\urladdr{\href{https://fas.hcmut.edu.vn/personnel/ptrongthuc}{https://fas.hcmut.edu.vn/personnel/ptrongthuc}}

\date{\today}

 \begin{abstract}
We establish the uniqueness of solutions to complex Monge-Amp\`ere mean field equations when the temperature parameter is small. In the local setting of bounded hyperconvex domains, our result partially confirms a conjecture by Berman and Berndtsson. Our approach also extends to the global context of compact complex manifolds. 
\end{abstract}

\maketitle


\section{Introduction}

It is classical that finding K\"ahler-Einstein metrics on a compact K\"ahler manifold $(X,\omega)$ of dimension $n$ is equivalent to solving complex Monge-Amp\`ere equations of the following type 
\begin{equation}
    \label{eq: MA compact intro}
    (\omega+dd^c \varphi)^n = e^{-\gamma \varphi} f\omega^n,
\end{equation}
where $f>0$ is a smooth function, $\gamma$ is a constant whose sign depends on that of the first Chern class of $X$. Here, we adopt the normalization in \cite{ACKPZ09}: $d= \partial + \bar \partial$ and $d^c = \frac{i}{2\pi}(\bar \partial -\partial)$, so that $dd^c =\frac{i}{\pi} \partial \bar \partial$.  When $\gamma \leq 0$, existence and uniqueness of smooth solutions was proved by Aubin \cite{Au78} (for $\gamma<0$) and Yau \cite{Yau78}. Using Ko{\l}odziej's estimate \cite{Kol98}, K\"ahler-Einstein metrics on singular K\"ahler varieties were constructed by Eyssidieux, Guedj, and Zeriahi \cite{EGZ09}. 
The uniqueness of solutions is well understood when $\gamma \leq 0$, even for very general right-hand sides (see \cite{DiwJFA09}, \cite{GZ07}). However, the situation is much more complicated when $\gamma > 0$, as solutions may not be unique, as  illustrated in an example of Fubini-Study metrics; see Example \ref{ex: FS}. Nonetheless, our first main result establishes uniqueness when $\gamma > 0$ is sufficiently small.

\begin{theorem}\label{thm: unique compact intro}
Assume $(X,\omega)$ is a compact K\"ahler manifold of dimension $n$, and $f$ is a probability density function on $X$ which belongs to $L^p(X,\omega^n)$, for some $p>1$. Then there exists $\gamma_0>0$, depending on $(X,\omega,n,p, \|f\|_p)$, such that for all $\gamma \in (0,\gamma_0)$, the equation \eqref{eq: MA compact intro} has a unique continuous solution.  
\end{theorem}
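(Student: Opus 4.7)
My plan is to combine a uniform $L^\infty$ a priori bound on continuous solutions with a Ko{\l}odziej-type stability estimate, and to exploit the smallness of $\gamma$ to close an iteration of fixed-point type. Write $V := \int_X \omega^n$ throughout.

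\emph{Step 1 (uniform $L^\infty$ estimate).} I would first show that for $\gamma_0$ small, every continuous solution $\varphi$ of \eqref{eq: MA compact intro} with $\gamma \in (0,\gamma_0)$ satisfies $\|\varphi\|_{L^\infty(X)} \le M_0$ with $M_0 = M_0(X,\omega,n,p,\|f\|_p,\gamma_0)$. Integrating the equation fixes the compatibility condition $\int_X e^{-\gamma\varphi} f\omega^n = V$, so Jensen's inequality applied against the probability measure $f\omega^n$ controls the $f\omega^n$-mean of $\varphi$ from above; $\omega$-plurisubharmonicity then upgrades this to a bound on $\sup\varphi$ via standard integrability of $\omega$-psh functions. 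A lower bound on $\inf\varphi$ follows from Ko{\l}odziej's $L^\infty$-estimate \cite{Kol98} applied to $(\omega+dd^c\varphi)^n = (e^{-\gamma\varphi}f)\omega^n$, whose density is uniformly bounded in $L^p$ once the upper bound on $\varphi$ is known. A short bootstrap closes the two-sided estimate once $\gamma_0$ is small.

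\emph{Step 2 (stability and iteration).} Let $\varphi_1,\varphi_2$ be two continuous solutions and set $F_i := e^{-\gamma\varphi_i}f \in L^p$. A Ko{\l}odziej-type stability estimate for the complex Monge-Amp\`ere equation gives
$$\|\varphi_1-\varphi_2\|_{L^\infty(X)} \le C\,\|F_1-F_2\|_{L^p(X,\omega^n)}^{\alpha}$$
for some exponent $\alpha = \alpha(n,p) \in (0,1]$ and $C$ depending only on the data of Step 1. On the other hand, the uniform bound $\|\varphi_i\|_\infty \le M_0$ together with the mean value inequality for $\exp$ yields $\|F_1-F_2\|_{L^p} \le \gamma\, e^{\gamma M_0}\,\|f\|_{L^p}\,\|\varphi_1-\varphi_2\|_{L^\infty}$, and so
$$\|\varphi_1-\varphi_2\|_{L^\infty(X)} \le C_1\,\gamma^{\alpha}\,\|\varphi_1-\varphi_2\|_{L^\infty(X)}^{\alpha}.$$

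\emph{Main obstacle.} If one had Lipschitz stability ($\alpha = 1$), the last inequality would immediately force $\varphi_1=\varphi_2$ whenever $C_1\gamma<1$, completing the proof. The classical Ko{\l}odziej stability has $\alpha<1$ strictly, and the displayed inequality only yields $\|\varphi_1-\varphi_2\|_{L^\infty} = O(\gamma^{\alpha/(1-\alpha)})$, a small but a priori nonzero gap. Resolving this is what I expect to be the genuine work. Two natural routes: (i) prove a Lipschitz-type stability on the precompact class of uniformly bounded solutions carved out in Step 1, perhaps by going through an intermediate norm (an $L^1$-distance or a $d_p$-distance on $\mathcal{E}^p$) and then bootstrapping back to $L^\infty$ via the uniform bound; or (ii) switch to the variational picture, viewing continuous solutions as critical points of the Ding-type functional $\mathcal{F}_\gamma(\varphi) := -E(\varphi) - \gamma^{-1}\log\bigl(V^{-1}\int_X e^{-\gamma\varphi}f\omega^n\bigr)$, which is invariant under addition of constants and convex along Mabuchi geodesics by Berndtsson's theorem. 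Its second variation at a solution $\varphi$ equals $\int|\nabla\psi|^2_{\omega_\varphi}\,d\mu_\varphi - \gamma V \operatorname{Var}_{\mu_\varphi}(\psi)$, and a Poincar\'e inequality for the Monge-Amp\`ere measure $(\omega+dd^c\varphi)^n$ promotes this to strict convexity on non-constant directions as soon as $\gamma$ is below a spectral gap; the additive-constant ambiguity is then killed by the integral normalization $\int e^{-\gamma\varphi}f\omega^n = V$. Either way, the crux is to produce a quantitative, $\gamma$-uniform rigidity (Lipschitz stability, or a lower bound on the spectral gap) from only the $L^p$ hypothesis on $f$.
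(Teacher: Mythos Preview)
Your overall strategy coincides with the paper's: uniform $L^\infty$ bound plus stability, closed by smallness of $\gamma$. You have also correctly isolated the genuine obstacle, namely that the classical Ko{\l}odziej stability exponent $\alpha<1$ only yields $\|\varphi_1-\varphi_2\|_\infty = O(\gamma^{\alpha/(1-\alpha)})$ and does not force equality. What is missing from your proposal is the actual mechanism that upgrades stability to Lipschitz ($\alpha=1$); the paper takes your route (i), but not via an intermediate $L^1$ or $d_p$ norm as you suggest.

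The idea you are lacking is the \emph{choice of norm on the densities}. Rather than $\|F_1-F_2\|_{L^p}$, the paper proves (Theorem~\ref{thm:stability}) that for normalized solutions of $(\omega+dd^c u)^n=f\omega^n$ and $(\omega+dd^c v)^n=g\omega^n$ with $\|f\|_p,\|g\|_p\le B$,
\[
\sup_X|u-v|\;\le\; C\,\bigl\|f^{1/n}-g^{1/n}\bigr\|_{L^{np}},
\]
with no exponent loss. The proof goes through the mixed Monge--Amp\`ere inequalities: for a suitable auxiliary potential $\psi$ solving $(\omega+dd^c\psi)^n$ proportional to $|f^{1/n}-g^{1/n}|^n\omega^n$, the binomial expansion of $((1-\varepsilon)\omega_v+\varepsilon\omega_\psi)^n$ together with $(dd^c u)^k\wedge(dd^c v)^{n-k}\ge f^{k/n}g^{(n-k)/n}\omega^n$ rebuilds $f$ from $g$ pointwise, and the domination principle converts this into the linear bound above. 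Plugging in $f_i=e^{-\gamma\varphi_i+b_i}f$ then gives $|f_1^{1/n}-f_2^{1/n}|\le C\gamma\|\varphi_1-\varphi_2\|_\infty\, f^{1/n}$ on the uniformly bounded class from Step~1, hence the clean contraction $\|\varphi_1-\varphi_2\|_\infty\le C\gamma\|\varphi_1-\varphi_2\|_\infty$, and uniqueness for $C\gamma<1$. Your route (ii) would require a Poincar\'e inequality for Monge--Amp\`ere measures with merely $L^p$ density, which is not known in this generality; the paper does not pursue it.
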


As mentioned above, the result in Theorem \ref{thm: unique compact intro} can not hold for all $\gamma>0$. Finding the critical value of $\gamma_0$ is thus an important problem. 
The proof of Theorem \ref{thm: unique compact intro} relies on a refinement of the stability estimate obtained in \cite{LPT21}, \cite{GLZAIF}, \cite{DZ10AIM}, \cite{Kol03IUMJ}. Using the same ideas we also obtain, in Section \ref{sect: hermitian}, similar results on compact Hermitian manifolds. 

Our argument also extends to provide a similar uniqueness result in the context of bounded hyperconvex domains, which we now describe. The equation under consideration is as follows:
\begin{equation}
    \label{eq: MAMF intro}
    (dd^c \varphi)^n = \frac{e^{-\gamma \varphi} fdV}{\int_{\Omega} e^{-\gamma \varphi} fdV}, \; \varphi \in \PSH(\Omega) \cap L^{\infty}, \; \varphi|_{\partial \Omega}=0,
\end{equation}
where $\Omega$ is a bounded hyperconvex domain in $\mathbb{C}^n$, $f$ is a probability density belonging to $L^p(\Omega,dV)$ for some $p>1$, and $\gamma>0$. 
When $f$ is smooth and positive, and $\Omega$ is strongly pseudoconvex, solutions of \eqref{eq: MAMF intro} are potentials of K\"ahler-Einstein metrics in $\Omega$ whose restrictions on the Levi distribution are conformal to the Levi form of $\partial \Omega$; see Section 2 of \cite{GKY13}. 
Using the variational method of \cite{BBGZ13}, it was shown in \cite{BB22} that \eqref{eq: MAMF intro} admits a continuous solution for $\gamma \in [0,\gamma_0)$, where $\gamma_0$ is a constant. Additionally, when $f$ is positive and smooth, as demonstrated in \cite{GKY13}, the solutions are smooth. Berman and Berndtsson conjectured that, when $\Omega$ is a Euclidean ball, any solution is radial, leading to the uniqueness of the solution; see \cite[Conjecture 7.6]{BB22}.  In the work of Guedj, Kolev, and Yeganefar \cite{GKY13}, the uniqueness of $S^1$-invariant solutions for circled domains was established, and it is expected to hold more generally; see the comments below \cite[Theorem C]{BGT23}. 

  Our second main result shows that \eqref{eq: MAMF intro} has a unique solution when $\gamma$ is sufficiently small, thus partially confirming the conjecture of Berman and Berndtsson

\begin{theorem}\label{thm: unique local intro}
Assume $\Omega$ is a bounded hyperconvex domain in $\mathbb{C}^n$ and $f\geq 0$ is a probability density belonging to $L^p(\Omega,dV)$, for some $p>1$. 
There exists $\gamma_0>0$, depending on $n,p,\|f\|_p$ and the diameter of $\Omega$, such that for all $\gamma<\gamma_0$ the equation \eqref{eq: MAMF intro}
has a unique solution $u\in \PSH(\Omega)\cap L^{\infty}(\Omega)$. 
\end{theorem}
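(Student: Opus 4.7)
The plan is to transfer the strategy of Theorem~\ref{thm: unique compact intro} to the Dirichlet setting of a bounded hyperconvex domain. First I would establish a uniform $L^\infty$-bound valid for all solutions when $\gamma$ is small. If $\varphi$ solves \eqref{eq: MAMF intro}, then the normalizing constant $c_\varphi := \big(\int_\Omega e^{-\gamma\varphi}f\,dV\big)^{-1}$ lies in $[e^{-\gamma\|\varphi\|_\infty},e^{\gamma\|\varphi\|_\infty}]$ (using $\|f\|_1 = 1$), so the density $g_\varphi := c_\varphi e^{-\gamma\varphi}f$ is pointwise dominated by $e^{2\gamma\|\varphi\|_\infty}f$, and hence lies in $L^p(\Omega)$ with norm at most $e^{2\gamma\|\varphi\|_\infty}\|f\|_p$. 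Ko\l odziej's $L^\infty$-estimate for the Dirichlet problem on a bounded hyperconvex domain then yields a relation of the form $\|\varphi\|_\infty \le A(n,p,\|f\|_p,\mathrm{diam}(\Omega))\cdot e^{C\gamma\|\varphi\|_\infty}$ with $C<\infty$ explicit, and since the right-hand side grows sublinearly in $\|\varphi\|_\infty$ when $\gamma$ is small, a bootstrap produces $\gamma_1>0$ and $M_0<\infty$, depending only on $n,p,\|f\|_p$ and $\mathrm{diam}(\Omega)$, such that every solution satisfies $\|\varphi\|_\infty \le M_0$ whenever $\gamma<\gamma_1$.

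Next, suppose $u,v$ are two solutions with $\gamma<\gamma_1$, and compare their Monge--Amp\`ere densities $g_u,g_v$. Applying the mean value theorem to $x\mapsto e^{-\gamma x}$ and to $\varphi\mapsto 1/c_\varphi$, and using $\|u\|_\infty,\|v\|_\infty\le M_0$, one obtains
$$\|g_u-g_v\|_{L^p(\Omega)} \;\le\; C_0(n,M_0,\|f\|_p)\,\gamma\,\|u-v\|_\infty.$$
The proof is then completed by coupling this with a stability estimate for the Dirichlet problem of the form
$$\|u-v\|_\infty \;\le\; C_1\,\|g_u-g_v\|_{L^p(\Omega)}^{\alpha},$$
where $C_1$ depends only on $n,p,M_0$ and $\mathrm{diam}(\Omega)$; note that both right-hand sides have total mass one, so the usual equal-mass hypothesis is automatic. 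If this stability holds with $\alpha=1$, combining the two bounds gives $\|u-v\|_\infty \le C_1 C_0\,\gamma\,\|u-v\|_\infty$, and it suffices to take $\gamma_0 := \min\bigl(\gamma_1, (2C_0 C_1)^{-1}\bigr)$ to conclude $u\equiv v$.

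The main obstacle is exactly the availability of such a sharp stability estimate. The classical Ko\l odziej version has only $\alpha = 1/n$, which combined with Step~2 would yield merely $\|u-v\|_\infty \to 0$ as $\gamma\to 0$, not $u=v$. The whole point of the refinement announced after Theorem~\ref{thm: unique compact intro} and developed in \cite{LPT21,GLZAIF,DZ10AIM,Kol03IUMJ} is to sharpen this exponent in the regime where the two measures have densities dominated by a common $L^p$ function, which is precisely our situation by the first step ($g_u, g_v \le e^{2\gamma M_0}f$). Carrying this out in the Dirichlet setting will require capacity estimates on bounded hyperconvex domains in the spirit of \cite{Kol03IUMJ,LPT21}, and tracking the dependence of the constants so that they involve only $n,p,\|f\|_p,M_0$ and $\mathrm{diam}(\Omega)$; once this refined local stability is in place, the uniqueness statement follows as above.
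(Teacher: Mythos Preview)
Your outline has two genuine gaps, and the paper in fact takes a different route that sidesteps both.

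\textbf{The $L^\infty$ bootstrap does not close.} From Ko\l odziej's estimate you obtain an inequality of the form $M\le A\,e^{C\gamma M}$ with $M=\|\varphi\|_\infty$. But this inequality alone does \emph{not} bound $M$: for every $\gamma>0$ it is satisfied by all sufficiently large $M$ (the function $M\mapsto M-Ae^{C\gamma M}$ is eventually negative). What it gives is $M\in[0,M_1]\cup[M_2,\infty)$ for some $M_1<M_2$, and excluding the upper branch would require a continuity argument in $\gamma$ starting from a known solution --- which is unavailable when the goal is to control an \emph{arbitrary} solution. The paper avoids this circularity: it tests the Monge--Amp\`ere measure $(dd^c\varphi)^n$ against $e^{-\beta v/2}$ for $v\in\mathcal{T}_0$, uses convexity of $\mathcal{T}_0$ to write $(v+\varphi)/2\in\mathcal{T}_0$, and thereby verifies a Moser--Trudinger bound for $(dd^c\varphi)^n$ with a constant independent of $\varphi$. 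Combined with a new $L^\infty$ estimate (Theorem~\ref{thm: Linfty est local}, proved via the barrier $e^{\gamma u/n}-1$), this yields $\gamma\sup_\Omega|\varphi|<n$ directly.

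\textbf{The uniqueness step does not rely on stability.} You correctly identify that the Ko\l odziej stability exponent $\alpha=1/n$ is insufficient, and you propose to develop a refined local stability estimate with $\alpha=1$. The paper does prove such refinements in the \emph{compact} setting (Theorems~\ref{thm: stability exp} and~\ref{thm:stability}), but for hyperconvex domains it does something much more direct. Lemma~\ref{lem: uniqueness for small sol} shows that any two solutions $u,v$ with $\gamma\sup_\Omega|u|<n$ and $\gamma\sup_\Omega|v|<n$ must coincide, via the following device: assuming $m_u\ge m_v=m$, one passes to the maximal solution $w\ge v$ of $(dd^c w)^n=e^{-\gamma w+m}f\,dV$ (Lemma~\ref{lem: maximal sol}), then solves
\[
(dd^c\rho)^n=\bigl(e^{(-\gamma v+m)/n}-e^{(-\gamma w+m)/n}\bigr)^n f\,dV,\qquad \rho|_{\partial\Omega}=0,
\]
and uses the mixed Monge--Amp\`ere inequality to get $(dd^c(w+\rho))^n\ge(dd^c v)^n$, hence $w+\rho\le v$ by comparison. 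The elementary bound $e^y-e^x\le(y-x)e^y$ yields $|\rho|\le\delta\sup_\Omega|w-v|$ with $\delta=n^{-1}\gamma\sup_\Omega|v|<1$, forcing $w=v$. No stability theory is needed.

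So while your strategy parallels the paper's treatment of the compact K\"ahler case, the local argument in the paper is self-contained and more elementary; carrying your plan through would require both fixing the a~priori bound and proving a new Lipschitz stability theorem for the Dirichlet problem.
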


In the one-dimensional case, equation \eqref{eq: MAMF intro} is referred to as the mean field equation, as explained in \cite[Section 7.3]{BB22}, where  $\gamma$ corresponds to (minus) the temperature parameter. In this case, when $f>0$ is smooth, uniqueness of solutions was recently established in \cite{GM18}, with related results in \cite{SSTW19}, \cite{BL14}. While uniqueness is well understood in one dimension, it remains a challenging open problem in higher dimensions. By \cite{Kol98}, when $\Omega$ is strongly pseudoconvex, the solution is automatically continuous, see \cite{Kol98}. 
Additionally, we highlight the recent work of Badiane and Zeriahi, where a similar uniqueness result was obtained via the eigenvalue problem (see \cite[Proposition 4.2]{BZ23}). Our proof relies on the $L^{\infty}$ estimate of Ko{\l}odziej which now has several different proofs (see \cite{GPT23}, \cite{GL21a} as well as Theorem \ref{thm: Linfty est local} and \ref{thm: Linfty global}). As a result, our approach applies without requiring any smoothness assumptions on the boundary of $\Omega$ and extends to densities in $L^p(\Omega,dV)$, $p>1$, that are not necessarily smooth. Finally, our approach can be extended to handle arbitrary continuous boundary data, rather than just zero.

\medskip

 We prove the main results in Sections \ref{sect: hyperconvex} and \ref{sect: Kahler}. In Section \ref{sect: hermitian}, we extend our findings to the context of compact Hermitian manifolds. Finally, in an effort to make the constant $\gamma_0$ explicit, we propose a new method for proving the $L^{\infty}$ estimate for complex Monge-Amp\`ere equations in Theorems \ref{thm: Linfty est local} and  \ref{thm: Linfty global}.

\begin{ackn} 
We acknowledge support from the Institut Universitaire de France, the project PARAPLUI ANR-20-CE40-0019, the KRIS project funded by the fondation Charles Defforey, and the Centre Henri Lebesgue ANR-11-LABX-0020-01. We thank Bo Berndtsson for insightful conversations on mean field equations and are grateful to Vincent Guedj and Ahmed Zeriahi for numerous helpful discussions during the preparation of the paper.  
\end{ackn}

\section{Monge-Amp\`ere mean field equations in hyperconvex domains}\label{sect: hyperconvex}

In this section, $\Omega$ is a bounded hyperconvex domain in $\mathbb{C}^n$. By this we mean there exists a continuous function $u : \overline{\Omega} \rightarrow \mathbb{R}$ such that $u=0$ on $\partial \Omega$, $u$ is plurisubharmonic (psh for short) in $\Omega$, and for all $c>0$, the set $\{u<-c\}$ is relatively compact in $\Omega$. We let $\PSH(\Omega)$ denote the set of all psh functions in $\Omega$.  

For a bounded psh function $u$, the complex Monge-Amp\`ere operator $(dd^c u)^n$ is a positive Radon measure, as defined by Bedford and Taylor \cite{BT76}. Moreover, this operator is continuous along sequences converging in capacity, and in particular along monotone sequences. 

We next recall the definition of Cegrell's classes. 
Let $u$ be a negative psh function defined in $\Omega$. 

\begin{definition}[The class $\mathcal{E}_p$, $\mathcal{F}$, and $\mathcal{T}_0$]{\;}
\begin{enumerate}
    \item 
  We say that $u\in \mathcal{E}_0(\Omega)$ if $u$ is bounded, it vanishes on the boundary $\partial \Omega$, and its Monge-Amp\`ere measure has finite total mass.  

  \item We say that $u\in \mathcal{E}_p(\Omega)$, $p>0$, if there exists a decreasing sequence $(u_j)\subset \mathcal{E}_0(\Omega)$ that converges to $u$ and satisfies  
\[
\sup_j \int_{\Omega} |u_j|^p(dd^c u_j)^n <+\infty.
\]
  
 \item We say that $u\in \mathcal{F}(\Omega)$ if there exists a decreasing sequence $(u_j)\subset \mathcal{E}_0(\Omega)$ that converges to $u$ and satisfies
\[
\sup_j \int_{\Omega} (dd^c u_j)^n <+\infty.
\]

\item We let $\mathcal{T}_0$ be the set of all $u\in \mathcal{E}_0(\Omega)$ such that $\int_{\Omega}(dd^c u)^n \leq 1$. 
\end{enumerate}
\end{definition}

As demonstrated by Cegrell \cite{Ceg98}, \cite{Ceg04}, the complex Monge-Amp\`ere operator can be conveniently defined for functions in $\mathcal{F}(\Omega)$ and $\mathcal{E}_p(\Omega)$, even when these functions are unbounded. Furthermore, for any positive non-pluripolar measure $\mu$ with finite total mass, the equation $(dd^c u)^n = \mu$ has a unique solution in $\mathcal{F}(\Omega)$. 

 Let $dV$ denote the Euclidean volume form of $\mathbb{C}^n$ and fix a probability density $0\leq f\in L^p(\Omega,dV)$, for some $p>1$. We study the following Monge-Amp\`ere equation 
\begin{equation}
	\label{eq: MAMF}
	(dd^c u)^n = \frac{e^{-\gamma u}fdV }{ \int_{\Omega} e^{-\gamma u} fdV}, \; u\in \PSH(\Omega)\cap L^{\infty}, \; u|_{\partial \Omega}=0,
\end{equation}
where $\gamma$ is a positive constant.  As shown in \cite{BB22} there exists $\gamma_0>0$ such that for all $\gamma\in (0,\gamma_0)$, the equation \eqref{eq: MAMF} admits a continuous solution, which is smooth up to the boundary when $f>0$ is smooth and $\Omega$ is strongly pseudoconvex; see \cite{GKY13}. Our goal in this section is to investigate the uniqueness problem. Note that solutions of \eqref{eq: MAMF} are automatically normalized with total Monge-Amp\`ere mass equal to $1$. In our analysis, we will frequently use the comparison principle, which states that subsolutions lie below supersolutions. For this reason, it will be more convenient to consider the following non-normalized equation
\begin{equation}
	\label{eq: MAMF non normalized}
	(dd^c u)^n = e^{-\gamma u +m}fdV,  \; \; u\in \PSH(\Omega)\cap L^{\infty}, \; u|_{\partial \Omega}=0,
\end{equation}
where $m$ is a fixed constant. 

\begin{lemma}[Maximal solution]\label{lem: maximal sol}
	If \eqref{eq: MAMF non normalized} admits a subsolution, then it admits a maximal solution. 
\end{lemma}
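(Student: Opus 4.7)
The plan is to adapt the Perron method to the nonlinear right-hand side of \eqref{eq: MAMF non normalized}. Let $\mathcal{S}$ denote the family of all bounded subsolutions, that is, $v\in\PSH(\Omega)\cap L^{\infty}(\Omega)$ with $v|_{\partial\Omega}=0$ and $(dd^c v)^n\ge e^{-\gamma v+m}f\,dV$. By hypothesis $\mathcal{S}\neq\emptyset$, and every $v\in\mathcal{S}$ satisfies $v\le 0$. Fixing any $v_0\in\mathcal{S}$, the Perron envelope
\[
U:=\Bigl(\sup_{v\in\mathcal{S}} v\Bigr)^{*}
\]
is a psh function with $v_0\le U\le 0$; in particular $U$ is bounded and $U|_{\partial\Omega}=0$.

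The key technical step is to show $U\in\mathcal{S}$. By Choquet's lemma there is a countable subfamily $\{v_j\}\subset\mathcal{S}$ whose upper regularized supremum equals $U$. Setting $w_k:=\max(v_1,\dots,v_k)$ gives an increasing sequence in $\PSH(\Omega)\cap L^{\infty}(\Omega)$; using the local identity $(dd^c\max(u,v))^n=(dd^c u)^n$ on the open set $\{u>v\}$, together with an $\varepsilon$-perturbation of the form $\max(u,v-\varepsilon)$ to absorb the coincidence set $\{u=v\}$, one checks that the $w_k$ remain subsolutions. Since $w_k\uparrow U$ quasi-everywhere and the sequence is uniformly bounded, Bedford--Taylor continuity of the Monge--Amp\`ere operator along bounded monotone sequences yields the weak convergence $(dd^c w_k)^n\to (dd^c U)^n$. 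Combined with $w_k\le U$, which gives $e^{-\gamma w_k+m}\ge e^{-\gamma U+m}$, passing to the limit in $(dd^c w_k)^n\ge e^{-\gamma w_k+m}f\,dV$ produces $(dd^c U)^n\ge e^{-\gamma U+m}f\,dV$, so $U\in\mathcal{S}$.

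To promote $U$ to a genuine solution I would apply one step of an order-preserving iteration. The frozen Dirichlet problem
\[
(dd^c \tilde U)^n=e^{-\gamma U+m}f\,dV,\qquad \tilde U|_{\partial\Omega}=0,
\]
has a unique bounded (continuous on $\Omega$) solution by Ko{\l}odziej's existence theorem for complex Monge--Amp\`ere equations, since the density $e^{-\gamma U+m}f$ belongs to $L^{p}(\Omega,dV)$ (as $U$ is bounded). Comparison of $U$ against $\tilde U$, both vanishing on $\partial\Omega$ and satisfying $(dd^c U)^n\ge (dd^c\tilde U)^n$, gives $U\le \tilde U$. Consequently $e^{-\gamma U+m}\ge e^{-\gamma \tilde U+m}$, so $\tilde U$ is itself a subsolution of \eqref{eq: MAMF non normalized}, and the very definition of $U$ as the envelope of $\mathcal{S}$ forces $\tilde U\le U$. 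Hence $U=\tilde U$ solves \eqref{eq: MAMF non normalized}, and any other solution, being an element of $\mathcal{S}$, is automatically $\le U$, so $U$ is maximal.

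The main obstacle is the max-stability step and the passage to the limit in the subsolution inequality: because the complex Monge--Amp\`ere operator is not local with respect to pointwise suprema, some care is required on the coincidence sets $\{v_i=v_j\}$. This is where the $\varepsilon$-perturbation trick, together with the uniform boundedness of the $w_k$ and dominated convergence on the right-hand side, does the heavy lifting; the final iteration step is then a routine consequence of Ko{\l}odziej's solvability result and the comparison principle.
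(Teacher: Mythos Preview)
Your proposal is correct and follows essentially the paper's route: Perron envelope of the subsolution family, max-stability of $\mathcal{S}$ via Choquet's lemma and Bedford--Taylor continuity, then an iteration with frozen right-hand side and the comparison principle. Two minor differences worth noting: the paper handles max-stability by directly invoking the inequality $(dd^c\max(v,w))^n\ge\mathbf{1}_{\{v\ge w\}}(dd^c v)^n+\mathbf{1}_{\{v<w\}}(dd^c w)^n$ (\cite[Corollary~3.28]{GZbook}), so your $\varepsilon$-perturbation detour is unnecessary; and your single-step argument $U\le\tilde U\in\mathcal{S}\Rightarrow\tilde U\le U$ is in fact a clean shortcut over the paper's full monotone iteration $\psi_0\le\psi_1\le\cdots\nearrow\psi$.
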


\begin{proof}
A subsolution of \eqref{eq: MAMF non normalized} is a bounded plurisubharmonic  function vanishing on the boundary such that the inequality $\geq$ holds. 	Assume $v,w\in \PSH(\Omega) \cap L^{\infty}(\Omega)$ are subsolutions of  \eqref{eq: MAMF non normalized}. Then, by the maximum principle, see \cite[Corollary 3.28]{GZbook}, we have 
	\begin{flalign*}
	(dd^c \max(v,w))^n & \geq 	{\bf 1}_{\{v\geq w\}} (dd^c v)^n + {\bf 1}_{\{v<w\}} (dd^c w)^n \\
	& \geq  	{\bf 1}_{\{v\geq w\}} e^{-\gamma v+m} fdV + {\bf 1}_{\{v<w\}} e^{-\gamma w +m} fdV\\
	& = e^{-\gamma \max(v,w)+m} fdv.  
	\end{flalign*}	

Let $\mathcal S$ denote the set of all bounded subsolutions of \eqref{eq: MAMF non normalized} and let $u$ be the upper envelope of $\mathcal S$. By this we mean 
\[
u := \left ( \sup \{v \in \mathcal S\} \right )^*,  
\]
the latter being well-defined because the family is bounded from above by $0$.
By Choquet's lemma there exists an increasing sequence $(u_j)\subset \mathcal S$, such that $(\lim u_j)^*=u$. Since $\mathcal S$ is stable under taking maximum, we infer from the continuity of the complex Monge-Amp\`ere operator along increasing sequences that $u\in \mathcal S$.

We next inductively define the sequence $(\psi_k)$ by solving the complex Monge-Amp\`ere equations 
	\[
	(dd^c \psi_{j+1})^n = e^{-\gamma \psi_k +m} fdV, \; \psi_{k+1}|_{ \partial \Omega}=0, \; \psi_0=u.  
	\]
    From  
    \[
    (dd^c \psi_0)^n \geq e^{-\gamma \psi_0 +m} fdV,\; (dd^c \psi_1)^n = e^{-\gamma \psi_0 +m} fdV,
    \]
	and the comparison principle, \cite[Corollary 3.30]{GZbook}, we obtain $\psi_0 \leq \psi_1$. Repeating this reasoning, we see that  $(\psi_k)$ is increasing with respect to $k$. Since $\psi_k\leq 0$, the sequence increasingly converges almost everywhere to some $\psi \in \PSH(\Omega)\cap L^{\infty}$. By the continuity of the complex Monge-Amp\`ere operator,  see \cite[Theorem 3.23]{GZbook}, it follows that $\psi$ solves \eqref{eq: MAMF non normalized}, hence $\psi \leq u$. However, the reverse inequality $u \leq \psi$ also holds because $u=\psi_0\leq \psi_k$. We thus infer that $u$ is the maximal solution of the equation \eqref{eq: MAMF non normalized}. 
\end{proof}

\begin{lemma}\label{lem: uniqueness for small sol}
	Assume $u,v\in \mathcal{F}(\Omega)$ solve  \eqref{eq: MAMF} and $\gamma \sup_{\Omega}|u|<n$, $\gamma \sup_{\Omega}|v|<n$. Then $u =v$.
\end{lemma}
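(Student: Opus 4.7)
The plan is to combine Bedford--Taylor integration by parts with a quantitative Jensen inequality, exploiting the smallness hypothesis $\gamma\sup|u|,\gamma\sup|v|<n$ to absorb error terms.

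First, I would set $I(\phi):=\int_\Omega e^{-\gamma\phi}f\,dV$ and apply the standard integration by parts for $\mathcal F(\Omega)\cap L^\infty$, yielding
\[
\int_\Omega(v-u)\bigl[(dd^cu)^n-(dd^cv)^n\bigr]=\int_\Omega d(v-u)\wedge d^c(v-u)\wedge T,
\]
where $T:=\sum_{j=0}^{n-1}(dd^cu)^j\wedge (dd^cv)^{n-1-j}$ is a positive closed current. Substituting the equations \eqref{eq: MAMF} for $u,v$ and noticing that $\log(d\mu_u/d\mu_v)=\gamma(v-u)+\log(I(v)/I(u))$ (with $\mu_\phi:=(dd^c\phi)^n$), the left-hand side rewrites as $\gamma^{-1}\bigl[H(\mu_u\,|\,\mu_v)+H(\mu_v\,|\,\mu_u)\bigr]$, where $H$ is the Kullback--Leibler divergence. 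This gives the key identity
\[
H(\mu_u\,|\,\mu_v)+H(\mu_v\,|\,\mu_u)=\gamma\int_\Omega d(v-u)\wedge d^c(v-u)\wedge T.
\]

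Next, since $M:=\gamma\sup|v-u|\leq \gamma(\sup|u|+\sup|v|)<2n$, I would use the elementary Taylor inequality $e^x\leq 1+x+\tfrac{x^2}{2}e^M$ valid for $|x|\leq M$. Applied with $x=-\gamma(v-u)$ and integrated against $\mu_u$, it gives
\[
\frac{I(v)}{I(u)}\leq 1-\gamma\int(v-u)d\mu_u+\frac{\gamma^2e^M}{2}\int(v-u)^2d\mu_u;
\]
taking the logarithm via $\log(1+y)\leq y$ then yields the upper bound $H(\mu_u\,|\,\mu_v)\leq \tfrac{\gamma^2 e^M}{2}\int(v-u)^2d\mu_u$, and symmetrically for $H(\mu_v\,|\,\mu_u)$. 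Combining with the identity,
\[
\int_\Omega d(v-u)\wedge d^c(v-u)\wedge T \leq \frac{\gamma e^M}{2}\Bigl(\int(v-u)^2d\mu_u+\int(v-u)^2d\mu_v\Bigr).
\]

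Finally, I would close the argument with a pluripotential Poincar\'e-type inequality of the form
\[
\int_\Omega w^2\,d\mu_\phi \leq C(\|u\|_\infty,\|v\|_\infty,n)\int_\Omega dw\wedge d^cw\wedge T
\]
for the bounded difference $w=v-u$ (which has vanishing boundary values inherited from $u,v\in\mathcal F(\Omega)$). This is proved by a sequence of integrations by parts and Cauchy--Schwarz applications, with a constant controlled by the $L^\infty$-norms. The precise threshold $n$ in the hypothesis $\gamma\sup|u|,\gamma\sup|v|<n$ is exactly what makes the product $\tfrac{\gamma e^M}{2}\cdot C$ strictly less than $1$. Absorbing the right-hand side then forces $\int d(v-u)\wedge d^c(v-u)\wedge T=0$, hence $v-u$ is constant on the support of $T$, and the vanishing boundary condition in $\mathcal F(\Omega)$ yields $u=v$.

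The main obstacle I expect is the last step: producing the Poincar\'e-type inequality with an explicit constant that genuinely matches the sharp threshold $n$ in the hypothesis, as opposed to a weaker smallness condition on $\gamma$. The rest of the argument is standard Cegrell--Bedford--Taylor calculus together with elementary Taylor estimates.
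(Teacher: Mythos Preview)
Your approach via the Kullback--Leibler identity and integration by parts is genuinely different from the paper's, and the identity
\[
H(\mu_u\,|\,\mu_v)+H(\mu_v\,|\,\mu_u)=\gamma\int_\Omega d(v-u)\wedge d^c(v-u)\wedge T
\]
is correct and attractive. However, there is a real gap at the closing step, which you yourself flag: the Poincar\'e-type inequality
\[
\int_\Omega w^2\,d\mu_\phi \leq C(\|u\|_\infty,\|v\|_\infty,n)\int_\Omega dw\wedge d^cw\wedge T
\]
is asserted but not proved, and no value of $C$ is produced. Your claim that the threshold $n$ in the hypothesis is ``exactly'' what makes $\tfrac{\gamma e^M}{2}\cdot C<1$ is not justified: the Taylor step already costs a factor $e^M$ with $M$ close to $n$, so even if some Poincar\'e inequality were available, the resulting smallness condition on $\gamma$ would carry an $e^{-n}$ factor and would not recover the clean bound $\gamma\sup_\Omega|u|<n$. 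As written, the outline at best yields uniqueness for \emph{some} small $\gamma$ depending on an unspecified constant, not the lemma as stated.

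The paper avoids this issue entirely by a comparison/contraction argument. After reducing (via a maximal-solution lemma) to comparing $v$ with the maximal solution $w\geq v$ of the non-normalized equation $(dd^c w)^n=e^{-\gamma w+m}f\,dV$, one solves
\[
(dd^c\rho)^n=\bigl(e^{(-\gamma v+m)/n}-e^{(-\gamma w+m)/n}\bigr)^n f\,dV,\quad \rho|_{\partial\Omega}=0,
\]
and the elementary bound $e^y-e^x\leq (y-x)e^y$ together with the comparison principle gives $|\rho|\leq \delta\sup_\Omega|w-v|$ with $\delta=n^{-1}\gamma\sup_\Omega|v|$. The mixed Monge--Amp\`ere inequalities then yield $(dd^c(w+\rho))^n\geq (dd^c v)^n$, hence $w+\rho\leq v$, so $0\leq w-v\leq|\rho|\leq\delta\sup_\Omega|w-v|$. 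The hypothesis $\gamma\sup_\Omega|v|<n$ is precisely $\delta<1$, and the contraction forces $w=v$. Here the sharp threshold $n$ arises transparently from the $n$-th root in the Monge--Amp\`ere operator combined with the Lipschitz bound for the exponential, not from any spectral or Poincar\'e constant.
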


\begin{proof}
Assume by contradiction that $u\neq v$. 
	Since they play the same role, we can assume that $m_u\geq m_v=m$, where $m_u= -\log \int_{\Omega}  e^{-\gamma u} fdV$. We then have 
	\[
	(dd^c \max(u,v))^n \geq e^{-\gamma \max(u,v) +m} fdV.
	\]
	By Lemma \ref{lem: maximal sol}, the maximal solution $w$ of the equation 
	\[
	(dd^c w)^n = e^{-\gamma w + m}  fdV,\; w|_{\partial \Omega} =0,
	\]
	satisfies $v\leq w$ and $v\neq w$. We let $\rho \in \PSH(\Omega)\cap L^{\infty}$ solve 
	\[
	(dd^c \rho)^n = \left (e^{(-\gamma v+m)/n} - e^{(-\gamma w+m)/n} \right )^n fdV = gdV, \; \rho|_{\partial \Omega =0}. 
	\]
    Using 
	\[
	e^y - e^x \leq (y-x) e^y, \; \forall x\leq y \in \mathbb R,
	\]
	we have 
	\[
	g \leq  n^{-n}\gamma^n (w-v)^n e^{-\gamma v+m} f. 
	\]
	By the comparison principle, see \cite[Corollary 3.30]{GZbook}, we thus get 
	\[
	\rho \geq \left ( n^{-1} \gamma \sup_{\Omega} |v-w|  \right) v,
	\]
	and hence $|\rho| \leq \delta \sup_{\Omega}|v-w|$,  where $\delta = n^{-1}\gamma \sup_{\Omega}|v|$, and by the assumption $\gamma \sup_{\Omega}|v|<n$, we have that $\delta\in (0,1)$.  Moreover, the mixed Monge-Amp\`ere inequalities (see Lemma \ref{lem: mixed MA} below) give 
	\[
	(dd^c (w + \rho))^n \geq (g^{1/n} + e^{(-\gamma w+m)/n} f^{1/n})^n = e^{-\gamma v+m} fdV = (dd^c v)^n.
	\]
	Applying again the comparison principle we obtain $w+\rho \leq v$, therefore 
	\[
	0\leq w-v \leq |\rho| \leq \delta\sup_{\Omega} |w-v|,
	\]
	which forces $w=v$, a contradiction.  
\end{proof}

The following result follows directly from the mixed Monge-Amp\`ere inequalities (see \cite{Gar59}, \cite{Kol03IUMJ}, \cite{Diw09Z}). 
\begin{lemma}[Mixed Monge-Amp\`ere inequalities]\label{lem: mixed MA}
    Assume $u,v$ are bounded psh functions and $g,h$ are $L^1$ densities such that 
    \[
    (dd^c u)^n \geq g dV, \; (dd^c v)^n \geq hdV.
    \]
    Then 
    \[
    (dd^c (u+v))^n \geq (g^{1/n} + h^{1/n})^n dV.  
    \]
\end{lemma}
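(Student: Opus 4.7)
The plan is to expand $(dd^c(u+v))^n$ by multilinearity, apply a pointwise determinantal inequality of Minkowski--G\aa rding type to the complex Hessian, and then pass from the smooth to the bounded psh setting by regularization.

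Writing
\[
(dd^c(u+v))^n = \sum_{k=0}^{n} \binom{n}{k} (dd^c u)^{k} \wedge (dd^c v)^{n-k},
\]
and using the binomial identity $(a^{1/n}+b^{1/n})^n = \sum_k \binom{n}{k} a^{k/n} b^{(n-k)/n}$, the statement reduces to proving the mixed inequality
\[
(dd^c u)^{k} \wedge (dd^c v)^{n-k} \geq g^{k/n}\, h^{(n-k)/n}\, dV
\]
for every $0\leq k\leq n$.

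For the smooth case, I would note that when $u,v$ are $C^2$ and strictly psh, one has $(dd^c u)^k\wedge (dd^c v)^{n-k} = \kappa_n\, D(H_u^{[k]}, H_v^{[n-k]})\, dV$ pointwise, where $H_u,H_v$ denote the complex Hessians, $D$ is the mixed discriminant, and $\kappa_n>0$ is a universal constant. Minkowski's determinantal inequality for positive Hermitian matrices $A,B$, $\det(A+B)^{1/n}\geq (\det A)^{1/n}+(\det B)^{1/n}$, admits the mixed generalization $D(A^{[k]},B^{[n-k]}) \geq (\det A)^{k/n}(\det B)^{(n-k)/n}$, a special case of G\aa rding's inequality for hyperbolic polynomials. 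Applying this pointwise and inserting the lower bounds on $\det H_u$ and $\det H_v$ coming from $(dd^c u)^n\geq g\,dV$ and $(dd^c v)^n\geq h\,dV$ yields the mixed inequality in the smooth setting.

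To handle general bounded psh $u,v$, I would approximate them on exhausting subdomains by decreasing sequences of smooth psh functions via standard convolution. Bedford--Taylor continuity along decreasing sequences then gives weak convergence of the left-hand sides to $(dd^c u)^k\wedge (dd^c v)^{n-k}$. The main obstacle is on the right-hand side: the densities of the mollified Monge-Amp\`ere measures need not stay bounded below by $g$ and $h$. The workaround, following \cite{Kol03IUMJ} and \cite{Diw09Z}, is to truncate $g$ and $h$ from below by continuous densities, invoke Ko{\l}odziej's solvability to realize them as Monge-Amp\`ere measures of continuous psh functions whose regularizations have controllable densities, apply the smooth-case inequality there, and then recover the general $L^1$ case by monotone convergence. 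Once the mixed inequality is established, summing over $k$ with binomial weights completes the argument.
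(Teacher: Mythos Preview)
Your proposal is correct and follows the same approach as the paper: reduce to the mixed inequality $(dd^c u)^{k}\wedge(dd^c v)^{n-k}\geq g^{k/n}h^{(n-k)/n}\,dV$ and then sum via the binomial expansion. The paper simply cites \cite{Gar59}, \cite{Kol03IUMJ}, \cite{Diw09Z} for the mixed inequality, whereas you go further and sketch its proof (G\aa rding's inequality in the smooth case, then the Ko{\l}odziej--Dinew approximation argument); this additional detail is accurate and adds nothing incompatible with the paper's argument.
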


\begin{proof}
    By the mixed Monge-Amp\`ere inequalities, see \cite{Gar59}, \cite{Kol03IUMJ}, \cite{Diw09Z}, we have 
    \[
    (dd^c u)^k \wedge (dd^c v)^{n-k} \geq g^{k/n} h^{(n-k)/n} dV.
    \]
    Thus the binomial expansion gives
    \begin{flalign*}
         (dd^c (u+v))^n &= \sum_{k=0}^n \binom{n}{k} (dd^c u)^k \wedge (dd^c v)^{n-k} \\
         &\geq \sum_{k=0}^n \binom{n}{k} g^{k/n} h^{(n-k)/n}dV \\
         &=(g^{1/n}+h^{1/n})^n dV. 
    \end{flalign*}
\end{proof}

We next move on to prove that for sufficiently small $\gamma>0$, the equation \eqref{eq: MAMF} admits a unique solution. In an effort to  make the constant $\gamma$ explicit, we provide the following $L^{\infty}$ estimate whose proof uses the resolution of \eqref{eq: MAMF}. 
\begin{theorem}\label{thm: Linfty est local}
    Assume $\gamma$ is a positive constant and $\mu$ is a probability measure in $\Omega$ which vanishes on pluripolar sets and satisfies
    \[
    A_{\mu}:= \sup \left \{ \int_{\Omega} e^{-\gamma u} d\mu \; : \; u \in \mathcal{T}_0 \right \}<+\infty. 
    \]
    If $\varphi \in \mathcal{F}(\Omega)$ solves $(dd^c \varphi)^n =\mu$, then 
    \[
    \varphi \geq -n A_{\mu}^{1/n} \gamma^{-1}.
    \]
\end{theorem}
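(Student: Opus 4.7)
The plan is to combine Kolodziej's capacity method with the exponential integrability encoded by $A_\mu$. First, reduce to the case $\varphi \in \mathcal{E}_0$ bounded: if $\varphi \in \mathcal{F}$, apply the result to the bounded approximants $\varphi_j := \max(\varphi,-j) \in \mathcal{E}_0$, whose Monge-Amp\`ere measures satisfy $(dd^c\varphi_j)^n \geq \mathbf{1}_{\{\varphi>-j\}}\mu$, and pass to the limit $j\to\infty$ using that the bound on $-\inf\varphi_j$ is uniform.

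For $s>0$ write $E_s := \{\varphi<-s\}$ and $g(s) := \mathrm{cap}(E_s,\Omega)^{-1/n}$, with the convention $g(s)=\infty$ when $\mathrm{cap}(E_s)=0$. The starting point is the classical Kolodziej lemma, which, applied to $\varphi \in \mathcal{E}_0$ with $\int(dd^c\varphi)^n = 1$, gives both $\mathrm{cap}(E_s)\leq s^{-n}$ and, more generally, $t^n\mathrm{cap}(E_{s+t}) \leq \mu(E_s)$ for all $s,t>0$. Next I feed the finiteness of $A_\mu$ into the picture via the rescaled relative extremal function: the function $u_s := g(s)\,V_{E_s}^{*}$ lies in $\mathcal{T}_0$ (it is bounded, vanishes on $\partial\Omega$, and has total Monge-Amp\`ere mass $g(s)^{n}\mathrm{cap}(E_s)=1$), and since $V_{E_s}^{*}=-1$ quasi-everywhere on $E_s$ and $\mu$ vanishes on pluripolar sets, the defining inequality $\int e^{-\gamma u_s}d\mu \leq A_\mu$ yields
\[
\mu(E_s) \;\leq\; A_\mu\,\exp\!\bigl(-\gamma\,g(s)\bigr).
\]
Combining this with Kolodziej's lemma produces the key recursion
\[
g(s+t) \;\geq\; t\,A_\mu^{-1/n}\,\exp\!\bigl(\gamma\,g(s)/n\bigr), \qquad s,t>0.
\]

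Finally, iterate this recursion: choosing a sequence $0 = s_0<s_1<\cdots$ with $s_{k+1}-s_k$ adapted so that $g(s_{k+1})\geq g(s_k)+\Delta_k$ for appropriate increments $\Delta_k$, one shows $g(s_k)\to\infty$ while $\sup_k s_k \leq n A_\mu^{1/n}/\gamma$. This forces $\mathrm{cap}\bigl(\{\varphi<-nA_\mu^{1/n}/\gamma\}\bigr)=0$; since $\mu$ vanishes on pluripolar sets and a maximality/comparison argument in $\mathcal{F}(\Omega)$ rules out $\varphi$ taking values below this threshold on a non-charged set, the pointwise bound $\varphi\geq -nA_\mu^{1/n}/\gamma$ follows. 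The main obstacle is executing the iteration sharply enough to produce exactly the constant $nA_\mu^{1/n}/\gamma$ rather than a larger multiple: a naive discretization loses a numerical factor, so one must either pass to the continuous limit of the recursion, which formally integrates to $\int_0^{\infty} A_\mu^{1/n}e^{-\gamma g/n}\,dg = nA_\mu^{1/n}/\gamma$, or bypass the iteration by solving the auxiliary equation $(dd^c\psi)^n = A_\mu^{-1}e^{-\gamma\varphi}d\mu$ and comparing $\psi+\lambda\varphi$ with $(1+\lambda)\varphi$ through Lemma \ref{lem: mixed MA} and the comparison principle, as was done in the proof of Lemma \ref{lem: uniqueness for small sol}.
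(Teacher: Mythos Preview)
Your approach via Ko{\l}odziej's capacity iteration is genuinely different from the paper's proof, which is much shorter and avoids capacity estimates entirely. The paper first uses the variational method to produce a solution $u\in\mathcal{E}_1(\Omega)$ of the \emph{mean field} equation $(dd^c u)^n = e^{-\gamma u}\mu/\int_\Omega e^{-\gamma u}\,d\mu$, then observes that the bounded psh function $v:=e^{\gamma u/n}-1\in[-1,0]$ satisfies $dd^c v\geq \gamma n^{-1}e^{\gamma u/n}dd^c u$, hence
\[
(dd^c v)^n \;\geq\; \frac{\gamma^n}{n^n}\,e^{\gamma u}(dd^c u)^n \;=\; \frac{\gamma^n}{n^n}\cdot\frac{\mu}{\int_\Omega e^{-\gamma u}d\mu}\;\geq\; \frac{\gamma^n}{n^n A_\mu}\,\mu \;=\;(dd^c(n^{-1}A_\mu^{-1/n}\gamma\,\varphi))^n,
\]
and a single application of the comparison principle gives $n^{-1}A_\mu^{-1/n}\gamma\,\varphi\geq v\geq -1$. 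The exponential substitution is what produces the sharp constant in one stroke; your capacity argument recovers the same inequality only after an ODE-type limiting argument that you have sketched but not carried out, and the standard discrete iteration indeed loses a factor.

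There is also a gap in your proposed bypass. Solving $(dd^c\psi)^n=A_\mu^{-1}e^{-\gamma\varphi}\mu$ is \emph{not} what the paper does: the auxiliary function in the paper solves a nonlinear equation in itself (the mean field equation), not a linear equation driven by $\varphi$. With your $\psi$, you do not a priori know that $\int_\Omega e^{-\gamma\varphi}d\mu\leq A_\mu$, since $\varphi\in\mathcal{F}(\Omega)$ need not lie in $\mathcal{T}_0$ before you have proved it is bounded; and even granting that, comparing $\psi+\lambda\varphi$ with $(1+\lambda)\varphi$ via Lemma~\ref{lem: mixed MA} does not obviously yield $\varphi\geq -nA_\mu^{1/n}\gamma^{-1}$, because Lemma~\ref{lem: mixed MA} gives a \emph{lower} bound on $(dd^c(\psi+\lambda\varphi))^n$, which goes the wrong way for the comparison you want. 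The decisive trick you are missing is the change of variable $v=e^{\gamma u/n}-1$, applied to the solution of the mean field equation rather than to $\varphi$ itself.
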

Recall that $\mathcal{T}_0$ is the set of all functions $u\in \mathcal{E}_0(\Omega)$ such that $\int_{\Omega} (dd^c u)^n \leq 1$. 
\begin{proof}
     The variational approach can be employed (see \cite{BGT23}, \cite{BB22}), giving a solution $u\in \mathcal{E}_1(\Omega)$ of the equation 
    \[
    (dd^c u)^n = \frac{e^{-\gamma u}\mu}{\int_{\Omega}e^{-\gamma u}d\mu}. 
    \]
    The function $v=e^{\gamma u/n}-1$ is psh in $\Omega$. Moreover, $-1\leq v\leq 0$, and  
    \[
    dd^c v \geq \gamma n^{-1} e^{\gamma u/n} dd^c u,
    \]
    thus
    \[
    (dd^c v)^n \geq  \frac{\gamma^n}{n^n} e^{\gamma u} (dd^c u)^n \geq \frac{\gamma^n}{n^n A_{\mu}} \mu = (dd^c \psi)^n,
    \]
    where $\psi= n^{-1}A_{\mu}^{-1/n}\gamma \varphi$. By the comparison principle, \cite[Theorem 5.15]{Ceg04}, we thus have $\psi \geq v\geq -1$, yielding
    \[
    \varphi \geq -n A_{\mu}^{1/n} \gamma^{-1}.
    \]
\end{proof}

We are now ready to prove the main result of this section, demonstrating the uniqueness of the solution to mean field equations with a small temperature parameter.

\begin{theorem}\label{thm: unique local}
    Assume that, for some $\beta>0$ and $A\geq 1$ we have, 
    \[
 \int_{\Omega} e^{-\beta u} fdV \leq A, \; \forall u \in \mathcal{T}_0.
    \]
    Then for all $\gamma<\gamma_0:=  2^{-1}\beta A^{-1/n}$, \eqref{eq: MAMF} admits a unique solution. 
\end{theorem}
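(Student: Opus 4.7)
The plan is to combine the a priori $L^{\infty}$ bound of Theorem \ref{thm: Linfty est local} with the uniqueness criterion of Lemma \ref{lem: uniqueness for small sol}. Existence of a solution for small $\gamma$ is already supplied by \cite{BB22}, so the substance of the statement is uniqueness. Let $u$ be any solution of \eqref{eq: MAMF}. Since $u\in\PSH(\Omega)\cap L^{\infty}$ vanishes on $\partial\Omega$ and the right hand side of \eqref{eq: MAMF} is a probability measure, one has $u\in\mathcal{E}_0(\Omega)\subset\mathcal{F}(\Omega)$ with $\int_{\Omega}(dd^c u)^n=1$, so $u$ itself is a legitimate test function in $\mathcal{T}_0$ for the standing hypothesis on $f$.

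Set $\mu:=(dd^c u)^n = Z^{-1} e^{-\gamma u}f\,dV$ with $Z=\int_{\Omega}e^{-\gamma u}f\,dV\geq 1$ (using $u\leq 0$ and that $f$ is a probability density). I would apply Theorem \ref{thm: Linfty est local} to $u$, but with the parameter $\gamma$ of that theorem replaced by $\beta':=\beta-\gamma$. The key input is a uniform bound on
\[
A_{\mu}^{(\beta')} := \sup_{v \in \mathcal{T}_0} \int_{\Omega} e^{-\beta' v}\, d\mu.
\]
Since $Z\geq 1$, the integrand is dominated by $e^{-\beta' v-\gamma u}f$, and H\"older's inequality with conjugate exponents $p=\beta/\beta'$ and $q=\beta/\gamma$ (which satisfy $1/p+1/q=1$ precisely because $\beta'+\gamma=\beta$) yields
\[
\int_{\Omega} e^{-\beta' v-\gamma u} f\, dV \leq \left(\int_{\Omega} e^{-\beta v} f\, dV\right)^{\beta'/\beta} \left(\int_{\Omega} e^{-\beta u} f\, dV\right)^{\gamma/\beta} \leq A,
\]
both factors being controlled by $A$ through the hypothesis applied to $v,u\in\mathcal{T}_0$. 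Therefore $A_{\mu}^{(\beta')}\leq A$, and Theorem \ref{thm: Linfty est local} then gives $\sup_{\Omega}|u|\leq n A^{1/n}/(\beta-\gamma)$.

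It remains to verify the hypothesis $\gamma\sup_{\Omega}|u|<n$ of Lemma \ref{lem: uniqueness for small sol}. The estimate above reduces this inequality to $\gamma(A^{1/n}+1)<\beta$, and since $A\geq 1$ forces $A^{1/n}+1\leq 2A^{1/n}$, the standing assumption $\gamma<\gamma_0=\beta A^{-1/n}/2$ is exactly what is needed. Lemma \ref{lem: uniqueness for small sol} applied to any two solutions then concludes the argument. The one genuinely delicate point is the H\"older step: one must observe that $u$ itself lies in $\mathcal{T}_0$ (so that the hypothesis applies to $\int e^{-\beta u}f\,dV$), while the trivial bound $Z\geq 1$ is what allows the factor $e^{-\gamma u}$ to be absorbed into the same hypothesis rather than producing a circular dependence on $\sup_{\Omega}|u|$.
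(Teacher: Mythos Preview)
Your argument is correct. The approach, while closely aligned with the paper's, differs in one substantive step: to bound
\[
\sup_{v\in\mathcal{T}_0}\int_{\Omega} e^{-\beta' v}\,(dd^c u)^n,
\]
the paper first uses $\gamma\leq \beta/2$ (which holds because $A\geq 1$) to majorize $e^{-\gamma\varphi}$ by $e^{-(\beta/2)\varphi}$, and then invokes the convexity of $\mathcal{T}_0$ (from \cite[Corollary~5.6]{Ceg04}) to apply the hypothesis to the midpoint $(v+\varphi)/2\in\mathcal{T}_0$; this yields the bound $A$ for the parameter $\beta/2$ and hence $\sup_{\Omega}|\varphi|\leq 2nA^{1/n}/\beta$. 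You instead split $e^{-\beta' v-\gamma u}$ by H\"older with exponents $\beta/\beta'$ and $\beta/\gamma$, applying the hypothesis separately to $v$ and to $u$ (both in $\mathcal{T}_0$); this yields the same bound $A$ but for the sharper parameter $\beta'=\beta-\gamma$, giving $\sup_{\Omega}|u|\leq nA^{1/n}/(\beta-\gamma)$. Your route is a touch more elementary in that it avoids the Cegrell convexity citation, and the intermediate $L^\infty$ estimate is sharper, but both arguments close at exactly the same threshold $\gamma_0=\beta A^{-1/n}/2$ via Lemma~\ref{lem: uniqueness for small sol}.
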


\begin{proof}
    Assume that $\varphi\in \mathcal{F}(\Omega)$ solves \eqref{eq: MAMF} with parameter $\gamma<\gamma_0$. 
     It follows from \cite[Corollary 5.6]{Ceg04} that the set $\mathcal T_0$ is convex. 
      For $v\in \mathcal T_0$, we have
     \[
     \int_{\Omega}e^{-\beta v/2}(dd^{c}\varphi)^{n}\leq \frac{\int_{\Omega}e^{-\beta(v+\varphi)/2}fdV}{\int_{\Omega}e^{-(\beta/2)\varphi}fdV}.
     \]
     Observe that the denominator above is greater than $1$ because $\varphi\leq 0$ and $fdV$ is a probability measure. Since $(v+\varphi)/2 \in \mathcal{T}_0$, it thus follows that  
    \[
     \int_{\Omega}e^{-\beta v/2}(dd^{c}\varphi)^{n}\leq \int_{\Omega}e^{-\beta(v+\varphi)/2}fdV \leq A.
     \]
    It thus follows from Theorem \ref{thm: Linfty est local} that $\varphi \geq -2n A^{1/n} \beta^{-1}$. Since $\gamma<2^{-1}\beta A^{-1/n}$, any solution $\varphi$ of \eqref{eq: MAMF} satisfies $\gamma \sup_{\Omega} |\varphi|<n$. The uniqueness of the solution of \eqref{eq: MAMF} thus follows from Lemma \ref{lem: uniqueness for small sol}. 
\end{proof}

\subsection*{Proof of Theorem \ref{thm: unique local intro}} It follows from \cite[Theorem B]{ACKPZ09} that, for any $\alpha<n$, 
\[
\int_{\Omega}e^{-2\alpha u}dV \leq A_1, \; \forall u \in \mathcal{T}_0,
\]
where $A_1$ depends on $\alpha$, $n$, and the diameter ${\rm diam}(\Omega)$ of $\Omega$. Let $q$ be the conjugate exponent of $p$, i.e. $1/p +1/q=1$. Then, by H\"older's inequality, we have
\[
\int_{\Omega}e^{-2q^{-1}\alpha u}fdV \leq \left(\int_{\Omega} e^{-2\alpha u} dV\right)^{1/q} \|f\|_p.
\]
Thus, in Theorem \ref{thm: unique local}, taking $\beta=2q^{-1}\alpha$ we see that the constant $A$ only depends on $n, p, \|f\|_p$, and the diameter of $\Omega$. Therefore, taking e.g. $\alpha=n/2$, we see that $\gamma_0$ in Theorem \ref{thm: unique local} only depends on $n,p,\|f\|_p$, and ${\rm diam}(\Omega)$.

\section{Stability and uniqueness on compact K\"ahler manifolds}\label{sect: Kahler}

We fix a compact complex manifold $X$ of dimension $n$, and let $\omega$ be a fixed K\"ahler metric on $X$, normalized so that $\int_X \omega^n=1$.

Recall that quasi-plurisubharmonic (quasi-psh for short) functions are locally the sum of a psh function and a smooth function. A function $u$ is $\omega$-psh if it is quasi-psh and $\omega+dd^c u \geq 0$ in the weak sense of currents. We let $\PSH(X,\omega)$ denote the set of all $\omega$-psh functions that are integrable over $X$, or equivalently those that are not identically $-\infty$. 

For any bounded $\omega$-psh function $u$, following Bedford and Taylor \cite{BT76}, the Monge-Amp\`ere operator $(\omega+dd^c u)^n$ is well-defined as a positive Radon measure with maximal total mass 
\[
\int_X (\omega+dd^c u)^n =\int_X \omega^n,
\]
as demonstrated in \cite{GZ05}. For unbounded $\omega$-psh functions $u$, the non-pluripolar Monge-Amp\`ere measure $(\omega+dd^c u)^n$ is defined, in \cite{GZ07}, as a positive Radon measure: 
\[
(\omega+dd^c u)^n:= \lim_{j\to +\infty} {\bf 1}_{\{u>-j\}}(\omega+dd^c \max(u,-j))^n,
\]
where the sequence of measures increases with $j$. 
If this measure has maximal total mass then $u$ belongs to the class $\mathcal{E}(X,\omega)$. Functions in $\mathcal{E}$ are generally unbounded but their singularities are mild; for any $u\in \PSH(X,\omega)$ with $u\leq -1$, the function $-(-u)^a$ belongs to $\mathcal{E}(X,\omega)$, for any $0<a<1$. 

The class $\mathcal{E}^1(X,\omega)$ consists of all $u\in \mathcal{E}(X,\omega)$ that satisfy the integrability condition: 
\[
\int_X |u| (\omega+dd^c u)^n <+\infty. 
\]

The goal of this section is to prove Theorem \ref{thm: unique compact intro}. To begin, we establish two stability results which are refinements of \cite[Theorem 1.3]{LPT21}. These results build on the  ideas introduced in \cite{GLZAIF} and \cite{Kol03IUMJ}.

\begin{theorem}\label{thm: stability exp}
	Fix two constants $p>1$ and $B> 1$. There exists a constant $C$ depending on $p, B, \omega,n,X$, such that the following holds: if $f,g$ are nonnegative functions with 
	\[
	B^{-1}\leq  \|f\|_1 \leq \|f\|_p \leq B, \; B^{-1}\leq \|g\|_1 \leq \|g\|_p \leq B,
	\]
	 then the solutions $u$, $v$ of 
	\[
	(\omega+dd^c u)^n = e^u f\omega^n\; , \; (\omega+dd^c v)^n = e^v g\omega^n
	\]
	satisfy
	\[
	\sup_X |u-v| \leq C \|f^{1/n}-g^{1/n}\|_{np}. 
	\]
\end{theorem}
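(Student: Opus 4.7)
My plan follows the spirit of Lemma \ref{lem: uniqueness for small sol} but adapted to the compact K\"ahler setting. First I would establish an a priori bound $\|u\|_\infty,\|v\|_\infty\leq M_0$ with $M_0$ depending on $(p,B,\omega,n,X)$. Integrating the equation yields $\int_X e^{u} f\,\omega^n = \int_X (\omega+dd^c u)^n = 1$, and this normalization together with $B^{-1}\leq \|f\|_1\leq \|f\|_p\leq B$ controls $\sup u$ (via a Jensen-type argument on the probability measure $f\omega^n/\|f\|_1$); the lower bound on $\inf u$ then follows from Kolodziej's $L^\infty$ estimate applied to $F:=e^u f$, whose $L^p$ norm is under control once $\sup u$ is. The same reasoning bounds $\|v\|_\infty$.

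Next, with $|u|,|v|\leq M_0$ the equations read $(\omega+dd^c u)^n=F\omega^n$ and $(\omega+dd^c v)^n=G\omega^n$ for $F=e^u f$ and $G=e^v g$ uniformly bounded in $L^p$. I would invoke the sharp K\"ahler stability estimate from \cite{LPT21, GLZAIF} in the form
\[
\sup_X |u-v| \leq C_{\ast}\,\|F^{1/n}-G^{1/n}\|_{np},
\]
with $C_{\ast}$ depending only on $M_0,p,\omega,n,X$. This is the K\"ahler analogue of the argument in Lemma \ref{lem: uniqueness for small sol}: one constructs an auxiliary $\omega$-psh function $\rho$ whose Monge-Amp\`ere measure dominates $((F^{1/n}-G^{1/n})^+)^n\omega^n$, uses the mixed Monge-Amp\`ere inequality of Lemma \ref{lem: mixed MA} to turn $\rho$ into a comparison function for $u$ and $v$, and transfers the $L^{np}$ density control into an $L^\infty$ bound on $u-v$ via Kolodziej's estimate and the compact comparison principle.

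Finally, I decompose
\[
F^{1/n}-G^{1/n}=e^{u/n}\bigl(f^{1/n}-g^{1/n}\bigr)+\bigl(e^{u/n}-e^{v/n}\bigr)\,g^{1/n},
\]
and use $|e^{u/n}-e^{v/n}|\leq (e^{M_0/n}/n)|u-v|$ together with $\|g\|_p\leq B$ to get $\|F^{1/n}-G^{1/n}\|_{np}\leq C_1\|f^{1/n}-g^{1/n}\|_{np}+C_2\sup_X|u-v|$. Substituting into the previous estimate yields $\sup|u-v|\leq A\|f^{1/n}-g^{1/n}\|_{np}+D\sup|u-v|$ with constants depending on $p,B,\omega,n,X$. \textbf{The main obstacle is closing this inequality:} one needs $D<1$, which likely requires sharpening the second phase so that the $L^\infty$ norm in the constant $C_2$ is replaced by a weaker quantity, for instance $\|u-v\|_{L^{np}(g\omega^n)}$, enabling a level-set / iteration argument in the spirit of \cite{Kol03IUMJ} to absorb the remaining term. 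This refinement is, I expect, the technical heart of the theorem and the place where the improvement over \cite{LPT21} must be worked out in detail.
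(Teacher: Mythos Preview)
Your a priori bound on $\|u\|_\infty,\|v\|_\infty$ is correct and essentially matches the paper. The gap is in the second step, and you have correctly located it yourself: the self-referential inequality $\sup|u-v|\leq A\|f^{1/n}-g^{1/n}\|_{np}+D\sup|u-v|$ does not close without further input, and your suggested fix (weakening $\sup|u-v|$ to an $L^{np}$ norm and iterating) is speculative. In fact the logic in the paper runs in the \emph{opposite} direction: the ``plain'' stability estimate $\sup|u-v|\leq C_\ast\|F^{1/n}-G^{1/n}\|_{np}$ for normalized solutions (Theorem~\ref{thm:stability}) is \emph{deduced from} the present theorem, not used to prove it. So invoking it here is, at best, circular relative to the paper's development.

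The key idea you are missing is that the exponential structure of the equation $(\omega+dd^c u)^n=e^u f\omega^n$ is a \emph{feature}, not an obstacle to be removed: by the domination principle, any bounded $\omega$-psh $\varphi$ with $(\omega+dd^c\varphi)^n\geq e^{\varphi}f\omega^n$ automatically satisfies $\varphi\leq u$. The paper exploits this directly. Setting $\varepsilon = e^{M_0/n}\|f^{1/n}-g^{1/n}\|_{np}$ (assumed $<1/2$, otherwise there is nothing to prove), one solves an auxiliary equation $(\omega+dd^c\psi)^n=h\omega^n$ with $h$ a probability density built from $e^v|f^{1/n}-g^{1/n}|^n$ (normalized and shifted by a constant), so that $\|h\|_p$ is under control and hence $\|\psi\|_\infty$ is bounded by Ko{\l}odziej. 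Then $\varphi:=(1-\varepsilon)v+\varepsilon\psi-C\varepsilon$ satisfies, via the mixed Monge--Amp\`ere inequality,
\[
(\omega+dd^c\varphi)^n\geq e^v\bigl((1-\varepsilon)g^{1/n}+|f^{1/n}-g^{1/n}|\bigr)^n\omega^n\geq e^{v+n\log(1-\varepsilon)}f\omega^n\geq e^{\varphi}f\omega^n,
\]
and the domination principle yields $u\geq\varphi\geq v-C'\varepsilon$ with no self-referential term whatsoever. The other inequality is symmetric. Thus the monotonicity in $u$ replaces your attempted bootstrap entirely.
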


\begin{proof}
	Let $u_0,v_0$ be the unique normalized solutions to 
    \[
    (\omega+dd^c u_0)^n = c_ff \omega^n, \; \text{and}\; (\omega+dd^c v_0)^n = c_gg \omega^n,\; \sup_X u_0=\sup_X v_0=0.
    \]
    Here, $c_f, c_g$ are positive constants ensuring that the measures $c_f f\omega^n$ and $c_gg\omega^n$ are probability measures. More precisely, $c_f \int_X f\omega^n=c_g \int_X g\omega^n=1$. From the estimates 
    \[
    B^{-1}\leq \int_X f \omega^n \leq B \; \text{and}\; B^{-1}\leq \int_X g \omega^n \leq B,
    \]
    we infer that $B^{-1}\leq c_f \leq B$ and $B^{-1}\leq c_g \leq B$. 
    
    By Ko{\l}odziej's $L^{\infty}$-estimate, see \cite{Kol98}, $|u_0|\leq C_0$ and $|v_0|\leq C_0$, for a uniform constant $C_0$. We thus have 
    \[
    (\omega+dd^c u_0)^n \geq  e^{u_0 + \log c_f} f\omega^n, \;  (\omega+dd^c u_0)^n \leq  e^{u_0 +C_0+ \log c_f} f\omega^n. 
    \] 
    The domination principle, \cite[Proposition 10.11]{GZbook}, then gives 
    \[
    u_0+ \log c_f \leq u \leq u_0 + C_0+\log c_f.
    \]
    Similarly, we also have 
    \[
    v_0+ \log c_g\leq v \leq v_0 +C_0+ \log c_g,
    \]
    hence $\sup_X |u| \leq C_0+ \log B$ and $\sup_X |v|\leq C_0+\log B = :C_1$.

	To proceed, we can assume that 
	\[
	\varepsilon:= e^{C_1/n} \|f^{1/n}-g^{1/n}\|_{np} \in (0,1/2).
	\] 
	Let $\psi$ be the unique bounded $\omega$-psh function solving 
	\[
	(\omega+dd^c \psi)^n = \left(  \frac{e^v |f^{1/n}-g^{1/n}|^n}{e^{C_1} \||f^{1/n}-g^{1/n}|^n\|_p} + a\right ) \omega^n = h\omega^n, \; \sup_X \psi=0,
	\]
	where $a\geq 0$ is a constant ensuring that $h$ is a probability density. By the triangle inequality for the $L^p$ norm, we have that $\|h\|_p\leq 2$, therefore, $\sup_X |\psi| \leq C$.  We next consider $\varphi:= (1-\varepsilon)v + \varepsilon \psi -C_2 \varepsilon$, where $C_2= \sup_X (\psi-v)+2n$. By the mixed Monge-Amp\`ere inequalities, see \cite{Diw09Z},  we have 
	\begin{flalign*}
		(\omega+dd^c \varphi)^n & = ((1-\varepsilon) \omega_v + \varepsilon \omega_{\psi})^n \\
		& = \sum_{k=0}^n \binom{n}{k} (1-\varepsilon)^k \varepsilon^{n-k}\omega_v^k \wedge \omega_{\psi}^{n-k} \\
		& \geq \sum_{k=0}^n \binom{n}{k} (1-\varepsilon)^k e^{kv/n} g^{k/n} e^{(n-k)v/n} |f^{1/n}-g^{1/n}|^{n-k}  \omega^n\\
		& = e^v \left ( (1-\varepsilon)  g^{1/n} +  |f^{1/n}-g^{1/n}|  \right)^n \omega^n\\
		&\geq e^{v+ n\log (1-\varepsilon)} f \omega^n.  
	\end{flalign*}
Noting that $\log (1-\varepsilon) \geq -2\varepsilon $, we obtain 
	\[
	(\omega+dd^c \varphi)^n \geq e^{\varphi} f\omega^n.
	\] 
	By the domination principle we thus get $\varphi \leq u$, and this yields 
	\begin{flalign*}
		u & \geq (1-\varepsilon)v + \varepsilon \psi -C_2 \varepsilon \\
		&\geq v + \varepsilon \inf_X (\psi-v)  - C_2 \varepsilon\\
		& = v - ({\rm Osc}_X(\psi-v) + 2n) \varepsilon. 
	\end{flalign*}
	We can similarly establish the other bound, finishing the proof. 
\end{proof}

\begin{theorem}\label{thm:stability}
	Fix two constants $p>1$ and $B>1$. There exists a constant $C$ depending on $p, B, \omega,n,X$, such that the following holds: if $f,g$ are probability densities with $\|f\|_p \leq B$ and $\|g\|_p\leq B$, then the normalized solutions $u$, $v$ to 
	\[
	(\omega+dd^c u)^n = f\omega^n\; , \; (\omega+dd^c v)^n = g\omega^n, \; \sup_X u= \sup_X v =0,
	\]
	satisfy
	\[
	\sup_X |u-v| \leq C \|f^{1/n}-g^{1/n}\|_{np}. 
	\]
\end{theorem}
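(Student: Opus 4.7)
The plan is to adapt the proof of Theorem~\ref{thm: stability exp} to the normalized setting, substituting the exponential domination principle with a reduction back to Theorem~\ref{thm: stability exp} via exponentialization. First, Kołodziej's $L^\infty$-estimate, applied to the normalized equations $(\omega+dd^c u)^n=f\omega^n$ and $(\omega+dd^c v)^n=g\omega^n$, yields $|u|,|v|\leq C_0$ for a constant $C_0=C_0(p,B,\omega,n,X)$. Set $\varepsilon:=\|f^{1/n}-g^{1/n}\|_{np}$; we may assume $\varepsilon<1/2$, since the trivial bound $\sup_X|u-v|\leq 2C_0$ handles the complementary case after enlarging $C$. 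Introduce the same auxiliary $\omega$-psh potential $\psi$ used in Theorem~\ref{thm: stability exp}, with $\sup_X\psi=0$, solving
\[
(\omega+dd^c\psi)^n=h\omega^n,\qquad h:=\frac{|f^{1/n}-g^{1/n}|^n}{\||f^{1/n}-g^{1/n}|^n\|_p}+a,
\]
where $a\geq 0$ is chosen so that $h\omega^n$ is a probability measure. Since $\|h\|_p\leq 2$ by the $L^p$-triangle inequality, Kołodziej's estimate gives $|\psi|\leq C$.

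Set $\varphi:=(1-\varepsilon)v+\varepsilon\psi$. The mixed Monge--Amp\`ere inequality (Lemma~\ref{lem: mixed MA}), applied exactly as in Theorem~\ref{thm: stability exp}, yields
\[
(\omega+dd^c\varphi)^n\geq\bigl((1-\varepsilon)g^{1/n}+|f^{1/n}-g^{1/n}|\bigr)^n\omega^n\geq(1-\varepsilon)^n f\omega^n.
\]
In Theorem~\ref{thm: stability exp}, the factor $e^v$ on the right-hand side, combined with $\log(1-\varepsilon)\geq-2\varepsilon$, absorbed the $(1-\varepsilon)^n$ into an additive shift, after which the domination principle for the exponential equation gave $\varphi\leq u$. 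Here the exponential weight is absent, so the measure inequality above has to be turned into a pointwise comparison by a different mechanism.

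The natural route is to reduce to Theorem~\ref{thm: stability exp} by exponentializing. Rewrite $(\omega+dd^c u)^n=f\omega^n$ as $(\omega+dd^c u)^n=e^u F\omega^n$ with $F:=e^{-u}f$, and analogously $G:=e^{-v}g$ for $v$. The bound $|u|,|v|\leq C_0$ ensures $e^{-C_0}\leq\|F\|_1,\|G\|_1$ and $\|F\|_p,\|G\|_p\leq e^{C_0}B$, so Theorem~\ref{thm: stability exp} applies and gives
\[
\sup_X|u-v|\leq C'\|F^{1/n}-G^{1/n}\|_{np}.
\]
Splitting $F^{1/n}-G^{1/n}=e^{-u/n}(f^{1/n}-g^{1/n})+\bigl(e^{-u/n}-e^{-v/n}\bigr)g^{1/n}$ and using $|e^{-u/n}-e^{-v/n}|\leq n^{-1}e^{C_0/n}|u-v|$ from the mean value theorem, one obtains after taking $L^{np}$-norms
\[
\|F^{1/n}-G^{1/n}\|_{np}\leq e^{C_0/n}\varepsilon+\frac{e^{C_0/n}B^{1/n}}{n}\sup_X|u-v|.
\]

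The \textbf{main obstacle} is closing the resulting bootstrap
\[
\sup_X|u-v|\leq C'e^{C_0/n}\varepsilon+\alpha\sup_X|u-v|,\qquad \alpha:=\frac{C'e^{C_0/n}B^{1/n}}{n}.
\]
When $\alpha<1$ this immediately yields $\sup_X|u-v|\leq C\varepsilon$. When $\alpha\geq 1$, one must first deploy a weaker qualitative stability (e.g.\ the $L^\infty$-stability of \cite{Kol03IUMJ} with respect to $\|f-g\|_1$) to ensure $\sup_X|u-v|\to 0$ as $\varepsilon\to 0$, then restrict to small enough $\varepsilon$ so that the correction $\alpha\sup_X|u-v|$ becomes negligible compared to the linear term and can be absorbed. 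This additional input is the essential structural difference from the proof of Theorem~\ref{thm: stability exp}, where the exponential weight on the right-hand side allowed the $(1-\varepsilon)^n$ factor to be absorbed directly without appeal to an auxiliary stability result.
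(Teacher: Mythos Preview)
Your bootstrap does not close, and the fix you propose does not repair it. The inequality
\[
\sup_X|u-v|\;\leq\; C'e^{C_0/n}\varepsilon \;+\; \alpha\,\sup_X|u-v|
\]
with $\alpha\geq 1$ carries no information on $\sup_X|u-v|$, \emph{regardless} of any prior smallness of $\sup_X|u-v|$: the only algebraic move available is to subtract $\alpha\sup_X|u-v|$ from both sides, and this requires $\alpha<1$. Invoking a qualitative stability result to make $\sup_X|u-v|$ small first does not change this; it would at best give $\sup_X|u-v|\leq C\varepsilon + \alpha\,\eta(\varepsilon)$ for some modulus $\eta$, which never improves to the linear estimate $C\varepsilon$ because $\alpha\eta(\varepsilon)$ dominates $C\varepsilon$ for small $\varepsilon$ unless $\eta$ is already linear. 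The circularity is structural: by writing $F=e^{-u}f$ and $G=e^{-v}g$ with \emph{different} exponential factors, you have built $\sup_X|u-v|$ into the right-hand side in an unremovable way.

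The paper breaks this circularity by decoupling the two comparisons. It introduces an intermediate potential $\psi$ solving
\[
(\omega+dd^c\psi)^n = e^{\psi-u}\,g\,\omega^n,
\]
so that, in the language of Theorem~\ref{thm: stability exp}, the two densities are $e^{-u}f$ and $e^{-u}g$ with the \emph{same} factor $e^{-u}$. Hence
\[
\sup_X|\psi-u|\leq C\,\|e^{-u/n}(f^{1/n}-g^{1/n})\|_{np}\leq C\varepsilon
\]
with no $|u-v|$ on the right. It remains to compare $\psi$ and $v$; here the densities $e^{\psi-u}g$ and $g$ satisfy $2^{-C\varepsilon}g\leq e^{\psi-u}g\leq 2^{C\varepsilon}g$, i.e.\ they are \emph{multiplicatively} close. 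The paper devotes a separate first step to this situation: working on the sublevel set $E=\{u<v+\alpha\}$ (with $2\alpha=\sup_X(u-v)-\sup_X(v-u)$), one produces a competitor $\rho=(1-\varepsilon)v+\varepsilon\psi_E$ whose Monge--Amp\`ere measure dominates $(1+c\varepsilon^2)^n f\omega^n$ on a set containing $\{u<\rho+\alpha-C\varepsilon\}$, and the domination principle with a multiplicative constant $>1$ (\cite[Proposition~2.8]{GL22}) then forces $u\geq\rho+\alpha-C\varepsilon$. This is the replacement for the exponential-weight absorption that you correctly identified as unavailable in the normalized setting.
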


\begin{proof}
We shall use $C_0,C_1,...$ to denote various uniform constants. 
	As mentioned above, Ko{\l}odziej's $L^{\infty}$-estimate \cite{Kol98} ensures that 
	\[
	\max(\sup_X |u|, \sup_X |v|) \leq C_0. 
	\]

	We divide the proof into two steps. In the first step
	we assume 
	\[
	2^{-\varepsilon} f \leq g \leq 2^{\varepsilon }f,
	\] 
	for some $\varepsilon\in [0,1/2)$, and we prove that 
	\[
	|u-v| \leq C \varepsilon,
	\]
	for a uniform constant $C>0$. If we define 
	\[
	2\alpha = \sup_X (u-v) - \sup_X (v-u), 
	\]
	then we either have 
	\[
	\int_{\{u<v+\alpha\}} \omega^n \leq 1/2 \; \text{or}\; \int_{\{v<u-\alpha\}} \omega^n \leq 1/2.
	\]
	We assume the first inequality holds (a similar argument applies for the second).  Now, let $\psi\in \PSH(X,\omega)\cap L^{\infty}(X)$ be the unique solution of
	\[
	(\omega+dd^c \psi)^n = 2 {\bf 1}_E f\omega^n + b \omega^n, \; \sup_X \psi=0.
	\]
	Here $b\in [0,1]$ is a constant ensuring that the right-hand side is a probability measure, and to simplify the notation, we denote $E= \{u<v+\alpha\}$. The $L^p$ norm of the right-hand side above is smaller than $2 B +1$, thus Ko{\l}odziej's estimate gives $|\psi| \leq C_1$.  Observe that $\rho:= (1-\varepsilon)v + \varepsilon \psi$ is bounded, $\omega$-psh. Using the mixed Monge-Amp\`ere inequalities we obtain 
	\[
	(\omega+dd^c \rho)^n \geq ((1-\varepsilon) g^{1/n} + \varepsilon 2^{1/n} {\bf 1}_E f^{1/n})^n\omega^n. 
	\]
	Thus, 
	\begin{flalign*}
		{\bf 1}_E(\omega+dd^c \rho)^n & \geq {\bf 1}_E ((1-\varepsilon) g^{1/n} + \varepsilon 2^{1/n}  f^{1/n})^n\omega^n\\
		&\geq {\bf 1}_E  ((1-\varepsilon) 2^{-\varepsilon/n} + \varepsilon 2^{1/n})^n f\omega^n. 
	\end{flalign*}
	Using $2^x = e^{x\log 2} \geq 1+ x \log 2$, we continue as follows 
	\begin{flalign*}
		{\bf 1}_E(\omega+dd^c \rho)^n & \geq {\bf 1}_E  (1+ n^{-1}\varepsilon^2 \log 2)^n f\omega^n. 
	\end{flalign*}
    For $C_2:= \sup_X (\psi-v)$, we have 
	\[
	\{u <\rho+\alpha-C_2\varepsilon\} \subset \{u < v +\alpha + \varepsilon (\psi-v) -C_2 \varepsilon\}  \subset \{u <v+\alpha\}=E,
	\]
 and the above estimate can be reformulated as 
	  \[
	  (1+ n^{-1} \varepsilon^2 \log 2)^n {\bf 1}_{\{u<\rho +\alpha -C_2 \varepsilon\}} (\omega+dd^c u)^n \leq  {\bf 1}_{\{u<\rho +\alpha -C_2 \varepsilon\}} (\omega+dd^c \rho)^n. 
	  \]
	  Since the multiplicative constant on the left-hand side above is strictly larger than $1$,  the domination principle, \cite[Proposition 2.8]{GL22}, yields 
	  \[
	  u\geq \rho + \alpha -C_2 \varepsilon \geq v +\alpha - C_3\varepsilon. 
	  \] 
	  Let $(x_j)$ be a sequence such that $\lim_j (v-u)(x_j) = \sup_X (v-u)$. Then the above estimate gives 
	  \[
	  \sup_X (v-u) \leq  \frac{\sup_X (v-u) -\sup_X (u-v)}{2} + C_3 \varepsilon,
	  \]
	  which implies 
	  \[
	  \sup_X (v-u)  + \sup_X (u-v) \leq 2C_3 \varepsilon. 
	  \]
	  From the normalization $\sup_X u =\sup_X v=0$, we infer that $\sup_X (u-v) \geq 0$ and $\sup_X (v-u) \geq 0$. Thus the above inequality gives  $\sup_X(u-v) \leq 2C_3 \varepsilon$ and  $\sup_X(v-u) \leq 2C_3\varepsilon$, hence $\sup_X |u-v|\leq 2C_3\varepsilon$, finishing the proof of the first step. 
	  
	  In the second step, we solve 
	  \[
	  (\omega+dd^c \psi)^n = e^{\psi-u} g\omega^n, \; \psi\in \PSH(X,\omega) \cap L^{\infty}(X). 
	  \]
	 Since $(\omega+dd^c u)^n =e^{u -u} f\omega^n$, it follows from Theorem \ref{thm: stability exp} that 
	  \[
	  \sup_X |\psi-u| \leq C \|e^{-u/n}(f^{1/n} -g^{1/n})\|_{np},
	  \]
	  for some uniform constant $C>0$. Since $|u|\leq C_0$, we deduce from the above estimate that $\sup_X |\psi-u| \leq C_4 \|f^{1/n}-g^{1/n}\|_{np}$. Using this and  $\sup_X u=0$ we obtain $|\sup_X \psi | \leq C_5\|f^{1/n}-g^{1/n}\|_{np}$. Applying the first step for $\psi-\sup_X \psi$ and $v$ with $\varepsilon = C_4 \|f^{1/n}-g^{1/n}\|_{np}$, we obtain 
	  \[
	 \sup_X|\psi-v| \leq C_6 \|f^{1/n}-g^{1/n}\|_{np}.
	  \]
	  The triangle inequality then finishes the proof. 

\end{proof}

We now present a new proof of Yau's $L^{\infty}$
  estimate, utilizing the resolution of \eqref{eq: MA compact intro}. The approach closely mirrors the method outlined in Theorem \ref{thm: Linfty est local}.

\begin{theorem}\label{thm: Linfty global}
    Let $\mu$ be a probability measure on $X$ that vanishes on pluripolar sets. Assume that, for all $u\in \PSH(X,\omega)$ with $\sup_X u=0$, we have
    \begin{equation}\label{eq: Skoda-Zeriahi}
        \int_X e^{-2\gamma u} d\mu \leq A_{\mu},
    \end{equation}
    where $\gamma\leq n$ is a positive constant. Let $\varphi\in \PSH(X,\omega)$, $\sup_X \varphi=0$, solve the Monge-Amp\`ere equation $(\omega+dd^c \varphi)^n = \mu$. Then  
    \[
    \varphi  \geq -1- \frac{n \log n+\log A_{\mu} - n \log \gamma}{\gamma}.
    \]
\end{theorem}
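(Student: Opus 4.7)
The strategy mirrors the proof of Theorem~\ref{thm: Linfty est local} from the hyperconvex case, adapted to the compact K\"ahler setting. First, by the variational method of \cite{BBGZ13}, I would solve the auxiliary mean field equation
\[
(\omega + dd^c u)^n = \lambda\, e^{-\gamma u}\, \mu, \qquad \lambda := \Bigl(\int_X e^{-\gamma u}\, d\mu\Bigr)^{-1},
\]
for a bounded $\omega$-psh function $u$ normalized by $\sup_X u = 0$. Since $u \leq 0$ one has $\lambda \leq 1$, while the Cauchy--Schwarz inequality combined with the hypothesis \eqref{eq: Skoda-Zeriahi} yields $\int_X e^{-\gamma u}\, d\mu \leq A_\mu^{1/2}$, and hence $\lambda \geq A_\mu^{-1/2}$.

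Next, mimicking the local argument, I would set $v := \frac{n}{\gamma}\bigl(e^{\gamma u/n} - 1\bigr)$. The chain-rule identity
\[
\omega + dd^c v = (1 - e^{\gamma u/n})\omega + e^{\gamma u/n}(\omega + dd^c u) + \frac{\gamma}{n}\, e^{\gamma u/n}\, du \wedge d^c u
\]
exhibits $v$ as an $\omega$-psh function (each summand is non-negative, using $\gamma \leq n$ and $u\leq 0$) with $-n/\gamma \leq v \leq 0$ and $\sup_X v = 0$. Dropping the gradient term and applying Minkowski's determinant inequality pointwise, together with the relation $(\omega+dd^c u)^n = \lambda e^{-\gamma u}\mu$, yields
\[
(\omega + dd^c v)^n \ \geq\ \bigl[(1 - e^{\gamma u/n}) + \lambda^{1/n}(\mu/\omega^n)^{1/n}\bigr]^n\omega^n \ \geq\ \lambda\, \mu\ =\ \lambda\, (\omega + dd^c\varphi)^n.
\]

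The final step is to convert this mass inequality, together with the explicit bound $v \geq -n/\gamma$, into a pointwise lower bound for $\varphi$; I expect this to be the main obstacle. In the hyperconvex case the rescaled function $\psi := n^{-1}A_\mu^{-1/n}\gamma\, \varphi$ lies in $\mathcal{F}(\Omega)$ and satisfies $(dd^c \psi)^n = (\gamma^n/n^n A_\mu)\mu$, whereupon the Cegrell comparison principle gives $v \leq \psi$ and yields $\varphi \geq -nA_\mu^{1/n}/\gamma$. On a compact K\"ahler manifold such a rescaling is not $\omega$-psh, so I would retain the refined Minkowski estimate above (keeping the $(1 - e^{\gamma u/n})\omega$ contribution) and combine it with a domination-type principle in the spirit of \cite[Proposition 2.8]{GL22} or \cite[Proposition 10.11]{GZbook} applied to the pair $(v,\varphi)$. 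The factor $2\gamma$ in the hypothesis (enabling the Cauchy--Schwarz step) and the extra $\omega^n$ contribution from the $(1 - e^{\gamma u/n})\omega$ term in the Minkowski bound should jointly convert the polynomial bound $\lambda \geq A_\mu^{-1/2}$ into logarithmic dependence on $A_\mu$, producing the explicit constants $1$ and $n\log(n/\gamma)$ appearing in the claimed bound $\varphi \geq -1 - (n\log n + \log A_\mu - n\log\gamma)/\gamma$.
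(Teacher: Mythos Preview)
Your proposal has a genuine gap: the comparison step you flag as ``the main obstacle'' is in fact the whole difficulty, and the speculation in your final paragraph does not resolve it. From $(\omega+dd^c v)^n \geq \lambda\,(\omega+dd^c\varphi)^n$ with a constant $\lambda\le 1$ there is no domination-type principle on a compact manifold that will force $\varphi\ge v-C$; both Monge--Amp\`ere measures have the same total mass, so the inequality with $\lambda<1$ carries no information about the relative positions of $v$ and $\varphi$. Keeping the $(1-e^{\gamma u/n})\omega$ term in the Minkowski bound does not help either, since that term is $u$-dependent and gives no pointwise control involving $\varphi$.

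The missing idea is to solve the auxiliary mean field equation against the \emph{twisted} measure $\nu:=e^{-\gamma\varphi}\mu$ rather than against $\mu$. With this choice, Cauchy--Schwarz and the hypothesis give $\int_X e^{-\gamma u}\,d\nu\le A_\mu$ (this is where the factor $2\gamma$ is used), and $(\omega+dd^c u)^n=e^{-\gamma u}\nu/\int_X e^{-\gamma u}\,d\nu$ yields
\[
e^{\gamma u}(\omega+dd^c u)^n \;\ge\; A_\mu^{-1}\,\nu \;=\; A_\mu^{-1}\,e^{-\gamma\varphi}(\omega+dd^c\varphi)^n.
\]
Taking $v=e^{\gamma u/n}-1$ (so $-1\le v\le 0$, without your $n/\gamma$ scaling) and using $\omega+dd^c v\ge \frac{\gamma}{n}e^{\gamma u/n}(\omega+dd^c u)$ together with $e^{\gamma v}\le 1$ then gives
\[
(\omega+dd^c v)^n \;\ge\; \frac{\gamma^n}{n^n A_\mu}\,e^{-\gamma\varphi}(\omega+dd^c\varphi)^n \;\ge\; e^{\gamma(v-C-\varphi)}(\omega+dd^c\varphi)^n,
\]
with $e^{-\gamma C}=\gamma^n n^{-n}A_\mu^{-1}$. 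This is exactly the exponential form to which the domination principle of \cite[Corollary~2.9]{GL22} applies, yielding $\varphi\ge v-C\ge -1-C$, which is the claimed bound. The twist by $e^{-\gamma\varphi}$ is precisely what inserts the $e^{-\gamma\varphi}$ factor on the right-hand side and makes the comparison work; without it your argument stalls.
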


Note that by the uniform version of Skoda's inequality by Zeriahi \cite{Zer01}, any positive measure with $L^p$ density ($p>1$) satisfies \eqref{eq: Skoda-Zeriahi}.
\begin{proof}
Consider $\nu = e^{-\gamma \varphi} \mu$. Then, by H\"older's inequality, we have, for all $u\in \PSH(X,\omega)$ with $\sup_X u=0$,
\[
\int_X e^{-\gamma u} d\nu  = \int_X e^{-\gamma u} e^{-\gamma \varphi} d\mu \leq A_{\mu}. 
\]
It follows from \cite[Theorem 11.13]{GZbook} that there exists $u \in \mathcal{E}^1(X,\omega)$ with $\sup_X u=0$ solving 
\[
(\omega+dd^c u)^n = \frac{e^{-\gamma u} \nu}{\int_X e^{-\gamma u} d\nu}. 
\]
The function $v= e^{\gamma u/n}-1$ is $\omega$-psh, $-1\leq v\leq 0$, and it satisfies 
\begin{flalign*}
(\omega+dd^c v)^n &\geq \gamma^n n^{-n} e^{\gamma u} (\omega+dd^c u)^n \\
& \geq \gamma^n n^{-n} A_{\mu}^{-1}  e^{\gamma (v-\varphi)}(\omega+dd^c \varphi)^n\\
& = e^{\gamma (v-C-\varphi)} (\omega+dd^c \varphi)^n,
\end{flalign*}
where $C$ is a constant verifying $e^{-\gamma C} =\gamma^n n^{-n} A_{\mu}^{-1}$.
By the domination principle, \cite[Corollary 2.9]{GL22},  we thus get  
\[
\varphi \geq v-C \geq -1- \frac{n \log n+\log A_{\mu} - n \log \gamma}{\gamma}. 
\]
\end{proof}

We are now ready to prove the main result of this section, addressing the uniqueness of the solution of \eqref{eq: MA compact intro} for sufficiently small $\gamma>0$. 
\begin{theorem}
	Assume $f$ is a probability density on $X$ which belongs to $L^p(X,\omega^n)$, for some $p>1$. Then there exists $\gamma_0>0$ such that for all $\gamma \in (0,\gamma_0)$, the equation 
	\[
	(\omega+dd^c u)^n =e^{-\gamma u} f\omega^n,
	\]
	has a unique solution $u\in \PSH(X,\omega) \cap L^{\infty}(X)$. 
\end{theorem}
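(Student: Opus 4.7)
The plan is to mimic the two-step strategy of Theorem~\ref{thm: unique local}: first establish a uniform $L^{\infty}$ bound on any solution that does not degenerate as $\gamma \to 0$, then apply the stability estimate of Theorem~\ref{thm:stability} to show that two solutions must coincide.

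For the a-priori bound, given a solution $u$, I would write $\tilde u := u - \sup_X u$, so that $\sup_X \tilde u = 0$ and $(\omega+dd^c \tilde u)^n = g_u \omega^n$ with $g_u := c_u e^{-\gamma \tilde u} f$ and $c_u := e^{-\gamma \sup_X u}$. Integrating over $X$ gives $\int_X g_u \omega^n = 1$. Using H\"older's inequality with exponents $p/p'$ and $p/(p-p')$ for some $1 < p' < p$, together with the uniform Skoda-Zeriahi exponential integrability of $\omega$-psh functions with zero supremum, I would conclude that $\|g_u\|_{p'} \leq B$ for some constant $B = B(n, p, \|f\|_p, \omega, X)$, provided $\gamma < \gamma_1$ for some small $\gamma_1 > 0$. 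Ko{\l}odziej's $L^{\infty}$ estimate then yields $\|\tilde u\|_{\infty} \leq M_0$ uniformly. Since $\sup_X u = \gamma^{-1} \log \int_X e^{-\gamma \tilde u} f \omega^n$ and $1 \leq \int_X e^{-\gamma \tilde u} f \omega^n \leq e^{\gamma M_0}$, it follows that $0 \leq \sup_X u \leq M_0$. Hence $\|u\|_{\infty} \leq 2 M_0$ uniformly in $\gamma$.

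For uniqueness, let $u_1, u_2$ be two solutions and $\tilde u_i = u_i - \sup_X u_i$. Each $\tilde u_i$ solves $(\omega + dd^c \tilde u_i)^n = g_i \omega^n$ with $g_i = c_i e^{-\gamma \tilde u_i} f$ a probability density satisfying $\|g_i\|_{p'} \leq B$. Applying Theorem~\ref{thm:stability} and using the mean-value inequality for the exponential together with the uniform bounds $|\tilde u_i| \leq M_0$, I would derive
\[
\sup_X |\tilde u_1 - \tilde u_2| \leq C \|g_1^{1/n} - g_2^{1/n}\|_{np} \leq C_2\, \gamma\, \|f\|_p^{1/n}\, \sup_X |\tilde u_1 - \tilde u_2|,
\]
where the second inequality also incorporates the bound $|c_1^{1/n} - c_2^{1/n}| \leq C_0\, \gamma\, \sup_X |\tilde u_1 - \tilde u_2|$ extracted from the explicit formula $c_i^{-1} = \int_X e^{-\gamma \tilde u_i} f \omega^n$. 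Choosing $\gamma_0$ so that $C_2\, \gamma_0\, \|f\|_p^{1/n} < 1$ forces $\tilde u_1 = \tilde u_2$. Then $c_1 = c_2$, so $\sup_X u_1 = \sup_X u_2$, and finally $u_1 = u_2$.

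The main obstacle will be ensuring that the $L^{\infty}$ bound in the first step is genuinely uniform in $\gamma$ rather than blowing up like $1/\gamma$ as $\gamma \to 0$. A direct application of Theorem~\ref{thm: Linfty global} to $\mu = e^{-\gamma u} f \omega^n$ gives only $|u| = O(|\log \gamma|/\gamma)$, which is much too weak to pair with the $\gamma$-linear stability estimate of Step 2. The sharper control instead comes from applying Ko{\l}odziej's estimate to $\tilde u$ itself, via the uniform $L^{p'}$ bound on $g_u$ obtained by combining Skoda-Zeriahi exponential integrability with H\"older's inequality; the separate bound $0 \leq \sup_X u \leq M_0$ must then be bookkept using only the normalization identity.
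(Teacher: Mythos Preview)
Your proposal is correct and follows essentially the same approach as the paper: normalize by subtracting the supremum, obtain a uniform $L^{\infty}$ bound on the normalized solutions, then apply the stability estimate Theorem~\ref{thm:stability} to derive a contraction inequality $\sup_X|\tilde u_1-\tilde u_2|\leq C\gamma\sup_X|\tilde u_1-\tilde u_2|$, which forces equality for small $\gamma$. The paper's uniqueness argument is line-for-line the same as yours (with $b_i=-\log c_i$ playing the role of your constants), and you have correctly identified and resolved the one subtle point---that the a-priori bound must be uniform as $\gamma\to 0$, which is obtained via Ko{\l}odziej's estimate applied to the normalized density $g_u\in L^{p'}$ (controlled by H\"older plus Skoda--Zeriahi) rather than via Theorem~\ref{thm: Linfty global}.
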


\begin{proof}
We first discuss the boundedness of the solutions. 
By H\"older's inequality and the Skoda-Zeriahi estimate \cite{Zer01}, we can find $\beta>0$ and $A_{\beta}>0$ such that \eqref{eq: Skoda-Zeriahi} holds for $\mu =f\omega^n$, i.e. 
\[
\int_X e^{-2\beta u} f\omega^n \leq A_{\beta},\; \forall u \in \PSH(X,\omega), \; \sup_X u=0. 
\]
 Fixing $\alpha\in (0,\beta)$, as shown in  \cite[Theorem 11.13]{GZbook} using the variational approach from \cite{BBGZ13}, there exists $\varphi\in \mathcal{E}^1(X,\omega)$, solving
\[
(\omega+dd^c \varphi)^n = \frac{e^{-\alpha \varphi} f\omega}{\int_X e^{-\alpha \varphi} f\omega^n}.
\]
By H\"older's inequality, we have, for all $u\in \PSH(X,\omega)$ with $\sup_X u=0$,
\[
\int_X e^{-\alpha u} (\omega+dd^c \varphi)^n \leq \int_X e^{-\alpha u}e^{-\alpha \varphi} f\omega^n \leq A_{\beta}.
\]
It thus follows from Theorem \ref{thm: Linfty global} that $\varphi$ is bounded. 

To address uniqueness, we assume $u,v$ are two bounded solutions and we write $u_0= u-\sup_X u$, $v_0=v-\sup_X v$. We rewrite the Monge-Amp\`ere equations of $u$ and $v$ as 
	\[
	(\omega+dd^c u_0)^n = e^{-\gamma u_0 - b_1} f\omega^n, \; (\omega+dd^c v_0)^n = e^{-\gamma v_0 - b_2} f\omega^n. 
	\]
	Then the constants $b_1,b_2$ can be computed as 
	\[
	e^{b_1} = \int_X e^{-\gamma u_0} f\omega^n, \; e^{b_2} = \int_X e^{-\gamma v_0} f\omega^n. 
	\] 
    We have $|b_1|\leq C_1$ and $|b_2|\leq C_1$, hence 
	\[
	|b_1-b_2|\leq C_2 |e^{b_1}-e^{b_2}| \leq \gamma C_3 \sup_X |u_0-v_0|.
	\]
	From the stability result, Theorem \ref{thm:stability},  we obtain 
	\begin{flalign*}
		\sup_X |u_0-v_0| &\leq C \sup_X |e^{-(\gamma u_0+b_1)/n} - e^{-(\gamma v_0+b_2)/n} | \|f\|_p^{1/n}\\
	& \leq 	C_4 \sup_X |\gamma (u_0-v_0) + (b_1-b_2)| \\
	&\leq \gamma C_5 \sup_X |u_0-v_0|. 
	\end{flalign*}
	Choosing $\gamma$ so small that $\gamma C_5<1$, we obtain $u_0=v_0$, hence $u=v$. 
\end{proof}

The following example is well-known (see Exercise 11.13 in \cite{GZbook}). 
\begin{example}\label{ex: FS}
    Let $\omega= \omega_{FS}$ be the Fubini-Study metric on $X=\mathbb{P}^n$. In the local chart $\mathbb{C}^n\subset X$ its local expression is 
    \[
    \omega = dd^c \log (1+ \|z\|^2). 
    \]
    For $\varepsilon>0$ we consider the $\omega$-psh function on $X$ whose local expression in $\mathbb{C}^n$ is 
    \[
    u_{\varepsilon} = \log (\|z\|^2 +\varepsilon) - \log  (\|z\|^2+1). 
    \]
    A direct computation shows that 
    \[
    (\omega+dd^c u_{\varepsilon})^n = (dd^c (\log \|z\|^2+\varepsilon))^n = C e^{-(n+1) u_{\varepsilon}} \omega^n,
    \]
    where $C$ is a positive constant. Computing the total mass of both sides we obtain
\[
C= \frac{\int_X \omega^n }{\int_X e^{-(n+1) u_{\varepsilon}} \omega^n}.
\]
The equation 
    \[
    (\omega+dd^c u)^n= e^{-(n+1) u} \omega^n
    \]
    admits many solutions since $u_{\varepsilon}+C_{\varepsilon}$ solves it for a suitable constant $C_{\varepsilon}$. 
\end{example}

\section{Monge-Amp\`ere equations on compact Hermitian manifolds}\label{sect: hermitian}
Let $(X,\omega)$ be a compact Hermitian manifold of dimension $n$. We let $\PSH_0(X,\omega)$ denote the set of $\omega$-psh functions $u$ normalized by $\int_X u \omega^n=0$. 

Following Yau's resolution of the Calabi conjecture, the study of complex Monge-Amp\`ere equations on compact Hermitian manifolds has drawn considerable attention. 
The equation we focus on is
\begin{equation}\label{eq: MA Hermitian} (\omega+dd^c \varphi)^n = e^{-\gamma \varphi} f\omega^n, 
\end{equation}
where $\gamma$ is a constant and $f$ is a probability density function belonging to $L^p(X,\omega^n)$ for some $p>1$.
When $\gamma<0$ and $f>0$ is smooth, Cherrier \cite{Cher87} proved the existence of a unique smooth solution. The case $\gamma=0$ was addressed by Tosatti and Weinkove \cite{TW10}, following contributions from Guan and Li \cite{GL10}. In this case, the equation is solvable modulo a multiplicative positive constant. 

A pluripotential theory tailored to this equation has since been developed by S.Dinew, Ko{\l}odziej, Nguyen, Guedj, T\^o, the present authors, and many others. A notable result is the resolution of \eqref{eq: MA Hermitian} for $\gamma\leq 0$ and $f\in L^p$, $p>1$. Existence of solutions in this case is well established but uniqueness (for $\gamma=0$) is only known when $f$ is strictly positive, as shown in \cite{KN19} with simplification by \cite{LPT21}.  We direct the reader to \cite{Diw16AFST}, \cite{KN19}, \cite{GL21c}, \cite{GL22}, and the references therein for further details. 

In this section, we study solutions of \eqref{eq: MA Hermitian} with $\gamma>0$. Fixing $1<q<p$, it follows from H\"older's inequality that 
\begin{equation}
    \label{eq: Skoda-Zeriahi Hermitian}
    \int_X e^{-\gamma qu} f^qdV <+\infty, \, \forall u \in \PSH_0(X,\omega),
\end{equation}
when $\gamma$ is sufficiently small. Let $\gamma_0$ be the supremum of all $\gamma>0$ such that \eqref{eq: Skoda-Zeriahi Hermitian} holds. 

\begin{theorem}\label{thm: MA Hermitian}
   For all $\gamma<\gamma_0$, \eqref{eq: MA Hermitian} admits a continuous solution. 
\end{theorem}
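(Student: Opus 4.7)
The plan is to produce a solution via Schauder's fixed point theorem applied to the natural solution operator. Given $\gamma<\gamma_0$, pick $q>1$ so that the uniform Skoda-Zeriahi bound
\[
\int_X e^{-\gamma q u} f^q \omega^n \leq A
\]
holds for all $u\in \PSH_0(X,\omega)$, with $A$ independent of $u$. For each $\varphi$ in the convex set
\[
\mathcal K := \{u \in C(X) \cap \PSH_0(X,\omega) : \|u\|_{L^\infty(X)} \leq M\},
\]
where $M$ is to be chosen, the measure $\nu_\varphi := e^{-\gamma \varphi} f\omega^n$ has an $L^q$ density with $\|\nu_\varphi\|_q$ bounded independently of $\varphi$. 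Invoking the existence theory for complex Monge-Amp\`ere equations on compact Hermitian manifolds with $L^p$ right-hand side, developed in \cite{TW10, KN19, GL21c}, one obtains a unique pair $(\psi_\varphi, \lambda(\varphi))$ with $\psi_\varphi \in C(X) \cap \PSH_0(X,\omega)$, $\lambda(\varphi)>0$, and
\[
(\omega+dd^c \psi_\varphi)^n = \lambda(\varphi)\, \nu_\varphi.
\]
Set $T(\varphi) := \psi_\varphi$. A fixed point of $T$ yields the desired solution of \eqref{eq: MA Hermitian} after adding the constant $\gamma^{-1}\log \lambda(\varphi)$.

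The next step is to verify the hypotheses of Schauder's theorem. The Hermitian $L^{\infty}$ estimate of \cite{KN19, GL21c, GL22}, a counterpart of Theorem~\ref{thm: Linfty global}, provides a bound $\|\psi_\varphi\|_{L^\infty(X)} \leq C_1$ depending only on $\|\nu_\varphi\|_q$, $\lambda(\varphi)$, and $(X,\omega,n)$. The total-mass identity $\int_X (\omega+dd^c \psi_\varphi)^n = \lambda(\varphi)\int_X \nu_\varphi$, combined with the positivity and boundedness of $\int_X \nu_\varphi$ (which follow from $|\varphi|\leq M$ and $\int_X f\omega^n=1$) and a control of the left-hand side in terms of $\|\psi_\varphi\|_{L^\infty}$, then gives a two-sided estimate $c_0 \leq \lambda(\varphi) \leq C_0$. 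Choosing $M \geq C_1$ ensures $T(\mathcal K) \subseteq \mathcal K$. Compactness of $T(\mathcal K)$ in $C(X)$ follows from the H\"older regularity of bounded solutions of Hermitian Monge-Amp\`ere equations with $L^p$ right-hand side, while continuity of $T$ in the uniform topology is a consequence of a Ko{\l}odziej-type stability estimate in the Hermitian setting, proved along the lines of Theorem~\ref{thm:stability}: if $\varphi_j \to \varphi$ uniformly, then $\nu_{\varphi_j} \to \nu_\varphi$ in $L^q$ and hence $\psi_{\varphi_j} \to \psi_\varphi$ uniformly. Schauder's theorem then produces a fixed point, completing the construction.

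The main difficulty is the simultaneous two-sided control of $\lambda(\varphi)$ and $\|\psi_\varphi\|_{L^\infty}$, since on a non-K\"ahler Hermitian manifold the total Monge-Amp\`ere mass is not a cohomological invariant and depends genuinely on the potential. This is handled by a bootstrap: the Skoda-Zeriahi uniform integrability on $\PSH_0$ bounds $\|\nu_\varphi\|_q$; the Hermitian $L^\infty$ estimate then controls $\|\psi_\varphi\|_{L^\infty}$ and hence the total mass of $(\omega+dd^c \psi_\varphi)^n$, which in turn pins down $\lambda(\varphi)$. The threshold $\gamma_0$ enters the argument precisely as the breakdown point of this uniform integrability, which is why the conclusion is sharp in the form stated.
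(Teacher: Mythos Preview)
Your overall plan---Schauder's fixed point theorem applied to a solution operator on a set of normalized $\omega$-psh functions---matches the paper's. However, there is a genuine gap in your implementation, and the paper's proof differs precisely at that point.

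The problem is the well-definedness and continuity of your map $T$. You solve, for each $\varphi$, the equation $(\omega+dd^c\psi)^n=\lambda(\varphi)\,e^{-\gamma\varphi}f\omega^n$ with $\psi\in\PSH_0(X,\omega)$, and you assert that the pair $(\psi_\varphi,\lambda(\varphi))$ is unique. On a non-K\"ahler Hermitian manifold this is \emph{not known} unless the density is strictly positive; the paper says so explicitly just before the theorem (``uniqueness (for $\gamma=0$) is only known when $f$ is strictly positive''). Since the statement allows $f\geq 0$ in $L^p$, your map $T$ may be multi-valued. The same positivity hypothesis appears in the Hermitian stability estimate you invoke for continuity of $T$ (compare Theorem~\ref{thm: stability Hermitian}, which assumes $\min(f,g)\geq c_0>0$), so your continuity step has the same defect. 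A secondary issue is that your ``bootstrap'' for the two-sided bound on $\lambda(\varphi)$ is circular as written: the Hermitian $L^\infty$ estimate for $\psi_\varphi$ depends on the $L^q$ norm of $\lambda(\varphi)e^{-\gamma\varphi}f$, hence on $\lambda(\varphi)$ itself; the correct argument extracts both bounds simultaneously from the existence package rather than via total mass.

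The paper sidesteps all of this with two devices. First, it works on $\PSH_0(X,\omega)$ with the $L^1$-topology, which is already convex and compact---no H\"older regularity or a priori $L^\infty$ ball is needed. Second, and this is the key idea you are missing, it introduces a \emph{damping term}: fixing $\alpha>0$ with $\alpha+\gamma<\gamma_0$, for each $u\in\PSH_0(X,\omega)$ it solves
\[
(\omega+dd^c v)^n = e^{\alpha v-(\alpha+\gamma)u}f\,\omega^n,
\]
and sets $F(u)=v-\int_X v\,\omega^n$. Because of the factor $e^{\alpha v}$ this is an equation of the well-understood type $(\omega+dd^c v)^n=e^{\alpha v}g\,\omega^n$ with $g\in L^r$, $r>1$, for which existence, uniqueness, and stability are available from \cite{Ng16AIM} without any strict positivity assumption on $g$. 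Continuity of $F$ in $L^1$ then follows directly, and a fixed point $u$ of $F$ gives a solution of \eqref{eq: MA Hermitian} after adding a constant. Your argument becomes correct if one assumes $f\geq c_0>0$, but for the theorem as stated the damping trick (or some substitute for Hermitian uniqueness) is essential.
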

\begin{proof}
   The continuity of solutions follows from \cite{KN19}. We will use the fixed point theorem to solve the equation. The starting point of this method is that the set $\PSH_0(X,\omega)\subset L^1(X,\omega^n)$, of all $\omega$-psh functions $u$ normalized by $\int_X u \omega^n=0$, is convex and compact with respect to the $L^1$-topology. Fix $\gamma\in (0,\gamma_0)$,  $\alpha>0$ so small that $\alpha +\gamma<\gamma_0$. 
For each $u\in \PSH_0(X,\omega)$, we solve 
\begin{equation}\label{eq: MA hermitian proof}
    (\omega+dd^c v)^n = e^{\alpha v- (\alpha+\gamma) u} fdV, \; v\in \PSH(X,\omega)\cap L^{\infty}(X).
\end{equation}
The existence and uniqueness of $v$ follow from \cite{Ng16AIM}. Moreover, $v$ is bounded because $e^{-(\alpha+\gamma) u} f$ belongs to $L^r$ for some $r>1$ (again by H\"older's inequality). 
We define the map 
\begin{eqnarray*}
F :  \PSH_0(X,\omega) & \longrightarrow & \PSH_0(X,\omega)\\
    u &\mapsto & v-\int_X v \omega^n.
\end{eqnarray*}
We prove that $F$ is continuous with respect to the $L^1$-topology. For this, let $(u_j)$ be a sequence in $\PSH_0(X,\omega)$ converging in $L^1$ to $u\in \PSH_0(X,\omega)$, and we prove that $F(u_j)$ converges in $L^1$ to $F(u)$. Observe that, by H\"older's inequality, $e^{-(\alpha+\gamma) u_j}f$ converges in $L^r$ to $e^{-(\alpha+\gamma) u}f$ for some $r>1$. It thus follows from \cite{Ng16AIM} that $v_j$ uniformly converges to $v\in \PSH_0(X,\omega)\cap L^{\infty}$. By the continuity of the Monge-Amp\`ere operator, \cite[Theorem 3.18]{GZbook}), it follows that $v$ solves \eqref{eq: MA hermitian proof}. Thus $F(u_j) \to F(u)$ as $j\to +\infty$, proving the continuity of $F$. 

By Schauder's fixed point theorem, see \cite[Theorem B.2, page 302]{Tay11}, there exists a fixed point of $F$, say $u\in \PSH_0(X,\omega)$. Then $u$ is bounded and $u+C$ solves \eqref{eq: MA Hermitian} for some constant $C$. 
\end{proof}

We next state the following stability result, refining \cite{KN19} and \cite{LPT21}. 

\begin{theorem}\label{thm: stability Hermitian}
    Assume $f,g$ are probability measures and $\min(f,g)\geq c_0>0$, where $c_0$ is a constant. Let $u,v \in \PSH(X,\omega)$, $\sup_X u =\sup_X v=0$, solve 
    \[
    (\omega+dd^c u)^n =  e^{c_f} f\omega^n, \; (\omega+dd^c v)^n =e^{c_g} g\omega^n,
    \]
    where $c_f$ and $c_g$ are constants. 
    Then
    \[
    \sup_X |u-v| + |c_f-c_g| \leq C \|f^{1/n}-g^{1/n}\|_{np}, 
    \]
    where $C$ is a positive constant depending on $c_0$, $p$, $X$, $\omega$, and an upper bound for $\|f\|_p$, $\|g\|_p$.  
\end{theorem}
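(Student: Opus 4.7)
The plan is to mimic the proof of Theorem \ref{thm: stability exp} in the Hermitian setting, coupled with an integration-by-parts argument to control the normalizing constants $c_f$ and $c_g$. The key inputs are the Hermitian Ko{\l}odziej $L^{\infty}$-estimate from \cite{KN19}, the Hermitian domination principle from \cite{GL22}, and the mixed Monge-Amp\`ere inequalities of Lemma \ref{lem: mixed MA}. First I would establish uniform a priori bounds on $\|u\|_{\infty}, \|v\|_{\infty}, |c_f|, |c_g|$: the assumption $\min(f,g)\geq c_0$ together with the mass identities $e^{c_f}=\int_X(\omega+dd^c u)^n$ (and analogously for $v$) and the standard two-sided control of the total Monge-Amp\`ere mass of bounded $\omega$-psh functions normalized by $\sup_X u=0$ force $c_f, c_g$ into a bounded interval; the Hermitian Ko{\l}odziej estimate applied to the now uniformly $L^p$-bounded right-hand sides then yields $\|u\|_{\infty}, \|v\|_{\infty}\leq M$.

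For the main stability step, I would set $\varepsilon:=K\|f^{1/n}-g^{1/n}\|_{np}$ with $K$ large enough (depending on $M$ and on exponentials of the a priori bounds) so that $\varepsilon \in (0,1/2)$, and then solve the Hermitian Monge-Amp\`ere equation $(\omega+dd^c \psi)^n = e^{c_\psi} h\omega^n$, $\sup_X \psi=0$, where $h$ is a probability density of uniformly bounded $L^p$-norm engineered so that $\varepsilon h^{1/n}\geq C_0^{-1}|f^{1/n}-g^{1/n}|$ pointwise. Existence and the bounds $\|\psi\|_{\infty}, |c_\psi|\leq C$ follow from \cite{KN19}. Setting $\varphi:=(1-\varepsilon)v+\varepsilon\psi-C_2\varepsilon$ for an appropriate constant $C_2$, and invoking Lemma \ref{lem: mixed MA} together with the pointwise bound $(1-\varepsilon)g^{1/n}+|f^{1/n}-g^{1/n}|\geq (1-\varepsilon)f^{1/n}$ used in the proof of Theorem \ref{thm: stability exp}, I would obtain
\[
(\omega+dd^c \varphi)^n \geq (1-\varepsilon)^n e^{c_g} f\omega^n.
\]
The Hermitian domination principle of \cite{GL22}, applied to the pair $(\varphi,u)$, then gives $v-u\leq C\varepsilon+C(c_f-c_g)_+$; running the analogous argument with the roles of $u$ and $v$ swapped produces $u-v\leq C\varepsilon+C(c_g-c_f)_+$, and altogether $\sup_X|u-v|\leq C\varepsilon + C|c_f-c_g|$.

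To close the estimate and simultaneously bound $|c_f-c_g|$, I would use integration by parts. Writing
\[
e^{c_f}-e^{c_g}=\int_X\bigl[(\omega+dd^c u)^n-(\omega+dd^c v)^n\bigr]=\sum_{k=0}^{n-1}\int_X dd^c(u-v)\wedge (\omega+dd^c u)^k\wedge (\omega+dd^c v)^{n-1-k},
\]
and moving the $dd^c$ onto the wedge product on compact $X$, the K\"ahler case would give zero; on a Hermitian manifold one picks up smooth corrections involving $d\omega$, $d^c\omega$, $dd^c\omega$ wedged against bounded Bedford--Taylor currents, whose total masses are uniformly controlled by the a priori bounds from the first step. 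This yields $|e^{c_f}-e^{c_g}|\leq C\sup_X|u-v|$, hence $|c_f-c_g|\leq C'\sup_X|u-v|$. Combining with the stability bound and choosing $K$ large enough to make the composed constants contract then closes the estimate into the desired form.

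The main obstacle I anticipate is the application of the Hermitian domination principle when the scaling factor $(1-\varepsilon)^n e^{c_g-c_f}$ in the comparison is strictly less than $1$; the version of the principle invoked in the proof of Theorem \ref{thm:stability} requires a factor strictly greater than $1$. I would address this either by splitting into the cases $c_g\geq c_f$ and $c_f\geq c_g$ (running the forward argument in whichever direction the scaling cooperates and absorbing the discrepancy into the $C|c_f-c_g|$ term through the Kählerized $\sup_X u=\sup_X v=0$ symmetrization trick of Theorem \ref{thm:stability}), or by appealing to a weighted Hermitian domination principle. A secondary technical point is the rigorous integration by parts above: since the currents $(\omega+dd^c u)^k$ are singular, the transfer of $dd^c$ and the boundedness of the resulting Hermitian correction terms must be justified through Bedford--Taylor continuity together with the smoothness of $\omega$.
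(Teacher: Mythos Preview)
Your plan has a genuine gap in the closing step. You arrive at two inequalities,
\[
\sup_X|u-v|\leq C_1\varepsilon + C_1|c_f-c_g|\qquad\text{and}\qquad |c_f-c_g|\leq C_2\sup_X|u-v|,
\]
and then assert that ``choosing $K$ large enough'' makes the composed constants contract. It does not: $K$ only enters through $\varepsilon=K\|f^{1/n}-g^{1/n}\|_{np}$, so increasing $K$ enlarges the first term on the right of the first inequality and leaves both $C_1$ and $C_2$ untouched. Unless $C_1C_2<1$---and $C_2$ is a fixed torsion constant of $(X,\omega)$ while $C_1$ comes from the a~priori oscillation bounds---the loop never closes. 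A related problem occurs one step earlier: from $(\omega+dd^c\varphi)^n\geq(1-\varepsilon)^n e^{c_g}f\omega^n$ you cannot invoke the Hermitian domination principle of \cite{GL22} against $(\omega+dd^c u)^n=e^{c_f}f\omega^n$, since the comparison factor $(1-\varepsilon)^n e^{c_g-c_f}$ is in general strictly below $1$ regardless of the sign of $c_f-c_g$; the one-sided bounds $v-u\leq C\varepsilon+C(c_f-c_g)_+$ you write down are therefore not justified by your argument.

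The paper does not spell out a proof here but points to \cite{LPT21} together with the two-step scheme of Theorem~\ref{thm:stability}. That scheme is what you should follow rather than the single-step template of Theorem~\ref{thm: stability exp}. In Step~1 one assumes $2^{-\varepsilon}f\leq g\leq 2^{\varepsilon}f$ and builds, exactly as in the K\"ahler proof, an auxiliary potential $\psi$ whose Monge--Amp\`ere measure carries the factor $2{\bf 1}_E f$; the point is that the resulting comparison factor is $(1+n^{-1}\varepsilon^2\log 2)^n>1$, which \emph{is} admissible for the Hermitian domination principle and, via the $\sup_X u=\sup_X v=0$ symmetrization, yields both $\sup_X|u-v|\leq C\varepsilon$ and $|c_f-c_g|\leq C\varepsilon$ simultaneously---no integration by parts is needed. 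The hypothesis $\min(f,g)\geq c_0$ is not merely used for a~priori bounds on $c_f,c_g$ as in your outline; it is what allows one to absorb the extra Hermitian normalizing constants attached to the auxiliary $\psi$ and to keep the comparison factor above $1$. Step~2 then reduces the general case to Step~1 through the exponential equation, exactly as in Theorem~\ref{thm:stability}, and the Hermitian analogue of Theorem~\ref{thm: stability exp} goes through unchanged because the domination principle for $(\omega+dd^c\varphi)^n\geq e^{\varphi}f\omega^n$ needs no factor larger than $1$.
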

\begin{proof}
Since the proof follows the same arguments as in \cite{LPT21}, with only minor adjustments from Section \ref{sect: Kahler}, we leave the details to the interested reader.
\end{proof}

\begin{remark}
    When $f>0$ is smooth and $\omega$ is K\"ahler, the solutions to \eqref{eq: MA Hermitian} obtained via Theorem \ref{thm: MA Hermitian} are smooth, as demonstrated in \cite{Szekelyhidi_Tosatti_2011_APDE} using the Monge-Amp\`ere flow. 
    In the Hermitian context, when $f>0$ is smooth and $\omega$ is positive but not necessarily K\"ahler, the approach of \cite{Szekelyhidi_Tosatti_2011_APDE} was adapted to the Hermitian context in \cite{Nie14}, where the same result was achieved under a curvature condition on $\omega$. This condition was later removed in \cite[Theorem C]{KN19} by applying the stability result obtained there. 
\end{remark}

\begin{theorem}\label{thm: uniqueness Hermitian}
Let $f\geq c_0>0$ be a probability density function belonging to $L^p$ for some $p>1$.
    There exists a positive constant $\gamma_1>0$ such that for all $\gamma <\gamma_1$, the equation \eqref{eq: MA Hermitian} has a unique bounded solution.
\end{theorem}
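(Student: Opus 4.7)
The strategy parallels the uniqueness proof at the end of Section \ref{sect: Kahler}, with Theorem \ref{thm: stability Hermitian} replacing Theorem \ref{thm:stability}. Let $u, v \in \PSH(X,\omega) \cap L^{\infty}(X)$ be two bounded solutions of \eqref{eq: MA Hermitian}. Setting $s_u = \sup_X u$, $u_0 = u - s_u$, $I_u = \int_X e^{-\gamma u_0} f\omega^n$, and analogously $s_v, v_0, I_v$, one rewrites the equations in the normalized form
\[
(\omega+dd^c u_0)^n = e^{c_u} F_u \, \omega^n, \qquad (\omega+dd^c v_0)^n = e^{c_v} F_v \, \omega^n,
\]
where $F_u = e^{-\gamma u_0} f / I_u$ and $F_v = e^{-\gamma v_0} f / I_v$ are probability densities, and $c_u = \log I_u - \gamma s_u$, $c_v = \log I_v - \gamma s_v$. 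The goal is then to feed this pair into the stability estimate and iterate.

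\emph{Step 1 (uniform $L^\infty$ bound).} For $\gamma$ below the Skoda--Zeriahi threshold $\gamma_0$ of Section \ref{sect: hermitian}, H\"older's inequality yields $\|e^{-\gamma u_0} f\|_q \leq A$ uniformly for some $q \in (1,p)$ and a constant $A = A(\gamma_0, f, X, \omega, n, p)$; in particular $1 \leq I_u \leq A$. Combined with a Ko{\l}odziej--Nguyen type $L^\infty$ estimate for the Hermitian complex Monge--Amp\`ere equation (\cite{KN19, Ng16AIM}, in the spirit of Theorem \ref{thm: Linfty est local}), this controls $\sup_X |u_0|$, and comparing total Monge--Amp\`ere masses on both sides of the equation in turn yields a uniform bound on $|s_u|$. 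Thus $\sup_X |u|, \sup_X |v| \leq M$ where $M$ depends only on $\gamma_1, f, X, \omega, n, p, c_0$; consequently $F_u, F_v \geq c_0' > 0$ and $\|F_u\|_p, \|F_v\|_p \leq B$ uniformly.

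\emph{Step 2 (contraction).} With these uniform bounds, Theorem \ref{thm: stability Hermitian} applies and gives
\[
\sup_X |u_0 - v_0| + |c_u - c_v| \leq C \, \|F_u^{1/n} - F_v^{1/n}\|_{np}.
\]
Using the Lipschitz behaviour of the exponential on the bounded interval $[-M, 0]$ together with $|I_u - I_v| \leq C_1 \gamma \sup_X |u_0 - v_0|$, a direct estimate yields $\|F_u^{1/n} - F_v^{1/n}\|_{np} \leq C_2 \gamma \sup_X |u_0 - v_0|$. Chaining the two inequalities gives $\sup_X |u_0 - v_0| \leq C_3 \gamma \sup_X |u_0 - v_0|$. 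Taking $\gamma_1 := \min(\gamma_0, 1/(2C_3))$, then for any $\gamma < \gamma_1$ we are forced to have $u_0 = v_0$; the stability estimate then yields $c_u = c_v$, and since $I_u = I_v$ this gives $s_u = s_v$, hence $u = v$.

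\emph{Main obstacle.} The principal difficulty is Step 1. In the K\"ahler setting the total Monge--Amp\`ere mass is a topological invariant, so uniform $L^\infty$ bounds follow cleanly from Theorem \ref{thm: Linfty global}. In the Hermitian case $\int_X (\omega + dd^c u)^n$ genuinely depends on $u$, and one must invoke the finer $L^\infty$ estimates of \cite{KN19, Ng16AIM} together with Skoda--Zeriahi to ensure that the bound on $\sup_X|u|$ is independent of the particular bounded solution -- without this uniformity the constant $C_3$ in the contraction estimate would not be universal and the choice of $\gamma_1$ would collapse.
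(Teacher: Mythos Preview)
Your proposal is correct and follows essentially the same route as the paper: normalize the solutions by their suprema, use Skoda--Zeriahi plus the Hermitian $L^\infty$ estimate (the paper cites \cite[Theorem 2.2]{GL21c}) to bound the normalized potentials uniformly, and then feed the resulting densities into Theorem \ref{thm: stability Hermitian} to obtain a contraction $\sup_X|u_0-v_0|\leq C\gamma\sup_X|u_0-v_0|$. The only cosmetic difference is that you renormalize the densities to probability measures via the factors $I_u,I_v$, whereas the paper works directly with $g=e^{-\gamma u_0}f$, $h=e^{-\gamma v_0}f$; your added discussion of bounding $|s_u|$ is not actually needed for the argument (only $\sup_X|u_0|$ matters), but it does no harm.
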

\begin{proof}
    Assume $u,v \in \PSH(X,\omega)\cap L^{\infty}$ are solutions of 
    \[
    (\omega+dd^c u)^n = e^{-\gamma u +a_1} fdV,\; (\omega+dd^c v)^n = e^{-\gamma v +a_2} fdV, \; \sup_X u=\sup_X v=0. 
    \]
    We rewrite the equations as follows
    \[
    (\omega+dd^c u)^n = e^{c_g} gdV,\; (\omega+dd^c v)^n = e^{c_h} hdV, \; \sup_X u=\sup_X v=0,
    \]
    where $g= e^{-\gamma u} f$ and $h=e^{-\gamma v}f$. For $\gamma\leq \gamma_0$, the densities $g,h$ are uniformly bounded in $L^q$, for some $q>1$, and moreover, $\min(g,h)\geq c_0$. The uniform estimate (see  \cite[Theorem 2.2]{GL21c}) gives $|u|+|v|\leq C_1$. Therefore, applying Theorem \ref{thm: stability Hermitian}, we obtain 
    \[
    \sup_X |u-v|\leq C_2 \|g^{1/n}-h^{1/n}\|_{np} \leq C_3 \gamma \sup_X |u-v|.
    \]
   Thus for $\gamma<C_3^{-1}$, the above inequality yields $u=v$, finishing the proof. 
\end{proof}

\bibliographystyle{alpha}
\bibliography{Biblio}

\end{document}